\newtheorem {Theorem}  {Theorem}
\newtheorem {Corollary} {Corollary}
\newtheorem {Definition} {Definition}
\newtheorem {Fact} {Fact}
\begin{document}
\baselineskip = 15pt
\bibliographystyle{plain}

\title{Undecidability of tiling the plane with a fixed number of Wang bars}
\date{}
\author{Chao Yang\\ 
              School of Mathematics and Statistics\\
              Guangdong University of Foreign Studies, Guangzhou, 510006, China\\
              sokoban2007@163.com, yangchao@gdufs.edu.cn\\
              \\
        Zhujun Zhang\\
              Government Data Management Center of\\
              Fengxian District, Shanghai, 201499, China\\
              zhangzhujun1988@163.com\\
              }

\maketitle

\begin{abstract}
To study the fixed parameter undecidability of tiling problem for a set of Wang tiles, Jeandel and Rolin show that the tiling problem for a set of $44$ Wang bars is undecidable. In this paper, we improve their result by proving that whether a set of $29$ Wang bars can tile the plane is undecidable. As a consequence, the tiling problem for a set of Wang tiles with color deficiency of $25$ is also undecidable.
\end{abstract}

\noindent{\textbf{Keywords}}:
Wang tiles, Wang bars, plane tiling, undecidability, color deficiency\\
MSC2020:  52C20, 68Q17

\section{Introduction} 

Wang's domino problem \cite{wang61} is one of the earliest known and most representative undecidable problems in tiling \cite{p14}. A \textit{Wang tile} is a unit square whose edges are assigned colors. See Figure \ref{fig_wang_set} for an example of a set of Wang tiles. For the purpose we will mention later, the edges of Wang tiles in Figure \ref{fig_wang_set} are parallel to lines of slopes $1$ or $-1$.


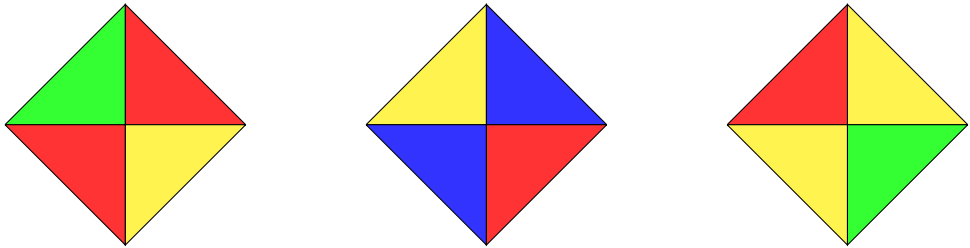
\begin{figure}[H]
\begin{center}
\begin{tikzpicture}[scale=0.8]

\draw [ fill=green!80] (0,0)--(2,2)--(2,0)--(0,0);
\draw [ fill=red!80] (0,0)--(2,-2)--(2,0)--(0,0);
\draw [ fill=red!80] (4,0)--(2,2)--(2,0)--(4,0);
\draw [ fill=yellow!80] (4,0)--(2,-2)--(2,0)--(4,0);

\draw [ fill=yellow!80] (6+0,0)--(6+2,2)--(6+2,0)--(6+0,0);
\draw [ fill=blue!80] (6+0,0)--(6+2,-2)--(6+2,0)--(6+0,0);
\draw [ fill=blue!80] (6+4,0)--(6+2,2)--(6+2,0)--(6+4,0);
\draw [ fill=red!80] (6+4,0)--(6+2,-2)--(6+2,0)--(6+4,0);

\draw [ fill=red!80] (12+0,0)--(12+2,2)--(12+2,0)--(12+0,0);
\draw [ fill=yellow!80] (12+0,0)--(12+2,-2)--(12+2,0)--(12+0,0);
\draw [ fill=yellow!80] (12+4,0)--(12+2,2)--(12+2,0)--(12+4,0);
\draw [ fill=green!80] (12+4,0)--(12+2,-2)--(12+2,0)--(12+4,0);

\end{tikzpicture}
\end{center}
\caption{A set of $3$ Wang tiles.}\label{fig_wang_set}
\end{figure}

\begin{Definition}[Wang's domino problem]
    Given a set of Wang tiles, is it possible to tile the entire plane with translated copies from the set, such that the tiles are edge-to-edge, and the common edge of any two adjacent tiles must have the same color?
\end{Definition}

Berger \cite{b66} proved the following result.

\begin{Theorem}[\cite{b66}]\label{thm_berger}
     Wang's domino problem is undecidable.
\end{Theorem}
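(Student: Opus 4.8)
The plan is to reduce an undecidable problem about Turing machines — concretely, the halting problem on blank input — to Wang's domino problem. Given a deterministic Turing machine $M$, I will describe an algorithm that outputs a finite set $\tau_M$ of Wang tiles with the property that $\tau_M$ tiles the plane if and only if $M$ does \emph{not} halt when started on the blank tape. Since the latter property is undecidable, so is the former, which is exactly Theorem~\ref{thm_berger}.

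The first ingredient is the standard one-dimensional simulation of a Turing machine by a row of tiles: a horizontal row encodes an instantaneous description of $M$ (tape symbols recorded on the top/bottom colors, with at most one tile carrying the head and current state), horizontal color matching forces each row to be a syntactically legal configuration, and vertical matching between consecutive rows forces the upper row to be the one-step successor of the lower row under the transition function of $M$. The delicate point — the reason a naive reduction fails — is that the plane has no distinguished origin, so nothing in this gadget forces a computation ever to \emph{begin} from a genuine blank configuration; the tiling in which every row reads ``blank tape, no head'' is always legal, so such a $\tau_M$ would tile the plane regardless of $M$. Berger's contribution is to defeat this by superimposing a second, geometric layer.

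That second layer is an \emph{aperiodic} skeleton: I would use (a variant of) the Robinson tile set, whose every valid tiling of the plane is forced into a hierarchy of nested, axis-aligned squares of side lengths $2^{k}$ for all $k$, with the interiors of these squares free of skeleton wiring. On top of this I route the computation layer so that inside each $2^{k}$-square a fresh simulation of $M$ is initialized on a blank tape along the bottom edge and is allotted room to run for on the order of $2^{k}$ steps. If $M$ halts on blank input in $T$ steps, then for every $k$ with $2^{k}>T$ the forced computation inside a $2^{k}$-square must reach the halting state; by making the halting state a dead end — a configuration for which no transition tile supplies a legal successor row — this yields a local impossibility, so no tiling of the plane can exist. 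Conversely, if $M$ never halts, one exhibits an explicit tiling: the Robinson skeleton admits a tiling, and the infinite non-halting run of $M$ fits consistently into all nested squares at once, so $\tau_M$ does tile the plane. (An alternative route, due to Kari, reduces instead from the immortality problem for Turing machines and replaces the Robinson layer by an NW-deterministic encoding; I mention it only as a variant and would still develop the Robinson-style argument above.)

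The main obstacle is the construction and verification of the interaction between the two layers. One must (i) prove the ``nested squares'' hierarchical structure and aperiodicity of the skeleton, which is itself a substantial combinatorial argument, and (ii) engineer the bookkeeping tiles that propagate the clean-tape initialization, the head, and the state through the scales of the hierarchy without the two layers interfering, while simultaneously guaranteeing the room-to-run property (needed for soundness when $M$ halts) and the global consistency of the infinite run with the whole hierarchy (needed for completeness when $M$ loops). Carrying out (ii) rigorously — enumerating color sets and tile types and checking every adjacency — is the routine but lengthy part; the conceptual crux is (i), together with the simple but decisive observation that ``halting'' is encoded as ``no successor tile exists'', which is what converts termination of $M$ into non-tileability of the plane.
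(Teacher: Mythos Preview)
The paper does not supply its own proof of this statement: Theorem~\ref{thm_berger} is quoted as Berger's result with a citation to~\cite{b66}, and the surrounding text only remarks that simpler proofs were later found~\cite{k08,o08,r71}. There is therefore nothing in the paper to compare your proposal against; your sketch of the Berger--Robinson strategy (Turing-machine rows plus an aperiodic hierarchical skeleton forcing unbounded finite computations to start) is the standard account and is consistent with what the cited references contain, but it goes well beyond what the present paper claims or needs.
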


Berger's original proof requires an explicit construction of a set of over $20000$ Wang tiles. Several simpler proofs \cite{k08,o08,r71} are found later. Besides the interest in looking for simpler proofs, there is also quite a lot of research on the undecidability of the generalizations and variants of Wang's domino problem. The finite tiling problem \cite{k94}, periodic domino tiling problem \cite{j10}, infinite snake tiling problem \cite{k03}, domino problem on rhombus subshift \cite{hln23}, rectangular tiling problem, and complementary tiling problem \cite{y14} were all shown to be undecidable. Undecidable tiling problems on the hyperbolic plane are studied in \cite{k08,m08,m23,r78}. 

Ollinger initiated the study of the tiling problem of the plane by translated copies of a fixed number of polyominoes \cite{o09}, which is called the \textit{$k$-polyomino tiling problem} for a fixed integer $k$. Ollinger showed that $11$-polyomino tiling problem is undecidable. Ollinger's result has been improved by Yang and Zhang in \cite{yang23,yang24,yz24}.

Jeandel and Rolin tried to obtain fixed parameter undecidability for the original Wang tiles \cite{jr12}. But fixing the number of tiles or colors of a set of Wang tiles does not work, because the problem has only a finite number of instances in these cases. So they defined the following parameter for sets of Wang tiles, which we will call \textit{color deficiency}.

\begin{Definition}[Color deficiency]
    Given a set $T$ of Wang tiles, let $n(T)=|T|$ be the number of tiles in the set, and let $c(T)$ be the maximum number of different colors among the four sides in the set. The color deficiency, denoted by $cd(T)$ is defined by $cd(T)=n(T)-c(T).$
\end{Definition}

It is obvious from the definition that the color deficiency must be non-negative. As an example, the color deficiency of the set of Wang tiles in Figure \ref{fig_wang_set} is $0$. To obtain undecidability result for Wang's domino problem with a fixed color deficiency, Jeandel and Rolin also introduced the plane tiling problem of a set of Wang bars. A \textit{Wang bar} is an extension of Wang tile such that the length of its horizontal sides can be any positive integer besides $1$, and each unit segment of the horizontal or vertical sides is assigned a color. Figure \ref{fig_wang_bar} illustrates a Wang bar of length $3$. A Wang tile is a Wang bar of length $1$.


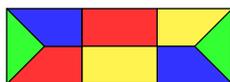
\begin{figure}[H]
\begin{center}
\begin{tikzpicture}

\draw (0,0)--(3,0)--(3,1)--(0,1)--(0,0);

\draw [fill=red!80] (0,0)--(0.5,0.5)--(1,0.5)--(1,0)--(0,0);
\draw [fill=blue!80] (0,1)--(0.5,0.5)--(1,0.5)--(1,1)--(0,1);

\draw [fill=red!80] (1,1)--(2,1)--(2,0.5)--(1,0.5)--(1,1);
\draw [fill=yellow!80] (1,0)--(2,0)--(2,0.5)--(1,0.5)--(1,0);

\draw [fill=yellow!80] (2,1)--(3,1)--(2.5,0.5)--(2,0.5)--(2,1);
\draw [fill=blue!80] (2,0)--(3,0)--(2.5,0.5)--(2,0.5)--(2,0);

\draw [fill=green!80] (0,0)--(0,1)--(0.5,0.5)--(0,0);
\draw [fill=green!80] (3,0)--(3,1)--(2.5,0.5)--(3,0);

\foreach \x in {1,...,2}
{
\draw (\x,1)--(\x,0);
}
\end{tikzpicture}
\end{center}
\caption{A Wang bar.}\label{fig_wang_bar}
\end{figure}

Jeandel and Rolin considered the following tiling problem for Wang bars.

\begin{Definition}[Tiling problem for $k$ Wang bars]
    Let $k$ be a fixed positive integer. Given a set of $k$ Wang bars, is it possible to tile the entire plane with translated copies of Wang bars from the set, such that the color of each unit segment matches (i.e. with the same color) for any two adjacent Wang bars in the tiling?
\end{Definition}

The following undecidability result is obtained in \cite{jr12}, by applying the idea of Ollinger in proving the undecidability of tiling with a fixed number of polyominoes \cite{o09}.

\begin{Theorem}[\cite{jr12}]\label{thm_wang_bar_44}
    The tiling problem for $44$ Wang bars is undecidable.
\end{Theorem}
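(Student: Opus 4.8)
The plan is to reduce from Wang's domino problem, which is undecidable by Theorem~\ref{thm_berger}, using the technique Ollinger developed for polyominoes~\cite{o09}. Given an arbitrary instance --- a finite set $T=\{t_1,\dots,t_n\}$ of Wang tiles over a color set $C$ --- I would construct, by an algorithm, a set $B(T)$ of \emph{exactly} $44$ Wang bars such that $B(T)$ tiles the plane if and only if $T$ does. The point is that the \emph{number} of bars in $B(T)$ is a structural constant, independent of $n$ and $|C|$: all data specific to $T$ (which tiles occur, and their four colors) is recorded in the \emph{lengths} and \emph{edge color sequences} of the bars, not in how many bars there are. Note that the color alphabet of $B(T)$ is allowed to grow with $T$; only the bar count is the fixed parameter. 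Since $T\mapsto B(T)$ is computable and tileability-preserving, undecidability transfers to the tiling problem for $44$ Wang bars.

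Next I would build $B(T)$ from a constant number of bar \emph{templates}, instantiated with $T$-dependent lengths and colorings. In the natural layout, one horizontal row of $B(T)$-bars simulates one row of a $T$-tiling: the single-unit left/right sides of the bars carry copies of the colors of $T$, so that horizontal matching of $B(T)$ literally reproduces the west/east matching of $T$, while the top and bottom edges carry, in a fixed auxiliary alphabet, an encoding of the north/south colors, so that vertical matching between two consecutive rows reproduces the vertical matching of $T$. Because a bar can be long, the $n$ tiles of $T$ are not represented by $n$ distinct bars; instead they are listed on a constant number of long ``catalog'' bars (whose lengths and colorings depend on $T$), and a fixed repertoire of ``selector'', ``copy'', ``separator'' and ``glue'' bars nondeterministically picks one catalog entry per cell and propagates it, so that the sequence of tiles read off any row is exactly a legal row of $T$. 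Tallying the templates --- the catalog bars, the selector/copy/separator bars, the alignment bars, and the bars realizing the two matching relations --- and optimizing the gadgets gives the count $44$.

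It then remains to check both directions of the reduction. For the forward implication, from a legal $T$-tiling one replaces each tile by the block of $B(T)$-bars prescribed by the gadgets; since the gadgets are designed so that two adjacent blocks mesh along an edge precisely when the corresponding $T$-tiles agree there, and since a block exists for every tile of $T$, this yields a legal $B(T)$-tiling of the whole plane. For the converse, one proves that the gadgets are \emph{rigid}: in any legal tiling by $B(T)$, the bars are forced to align into the intended grid of cell-blocks, each block encoding a single tile of $T$ and each adjacency encoding the correct color equality; reading off the tiles recovers a legal $T$-tiling. Hence $B(T)$ tiles the plane iff $T$ does.

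The main obstacle is the combination of two requirements. First, rigidity must be achieved with only a constant --- and small --- number of bar templates: the standard tilings simulating a Wang tileset use a number of tiles growing with $|T|$, so one must push all per-tile data into bar lengths and edge colors while designing the handful of templates so that no ``illegal'' interlocking is possible (for instance a row that never commits to a catalog entry, or a block straddling two cells). Second, the horizontal and vertical matching relations of $B(T)$ must be made \emph{exactly} equivalent to those of $T$ --- not merely necessary, not merely sufficient --- so that no spurious $B(T)$-tiling exists and no legal $T$-tiling is lost. Making every template synchronize correctly at the cell boundaries, and certifying that the optimized set really contains only $44$ bars, is the technical heart of the construction; this is carried out in detail in~\cite{jr12}.
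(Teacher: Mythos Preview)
This theorem is only \emph{cited} in the present paper; there is no proof here to compare against directly. What the paper does offer is a short description of Jeandel and Rolin's construction (Section~2.1), and your sketch should be measured against that. At the coarsest level you are right: the result is a reduction from Wang's domino problem via Ollinger's fixed-number framework, with all instance-dependent data pushed into bar lengths and edge colorings while the bar count stays constant.

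However, the mechanism you describe does not match the actual one. You write that ``the single-unit left/right sides of the bars carry copies of the colors of $T$, so that horizontal matching of $B(T)$ literally reproduces the west/east matching of $T$'', and only north/south matching is handled by encodings on the horizontal edges. That is not how Jeandel--Rolin (nor the paper's own $29$-bar improvement) works. In both constructions the simulated Wang tiles are effectively tilted by $\pi/4$: \emph{all four} color constraints of a Wang tile are encoded positionally (in unary) along the top and bottom edges of a long ``content'' (or ``encoder'') bar, and the matching between two neighbouring simulated tiles---in every direction---is enforced by connector bars (``handles''/``wires''/linkers) that run vertically between two such content bars. The vertical (east/west) sides of the bars carry only a fixed structural alphabet used for alignment, never the colors of $T$. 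Your picture of east/west colors living on the short sides of bars cannot work with a constant number of bars: a single catalog bar has only one left color and one right color, so it cannot by itself expose the west/east color of whichever tile the selector happens to pick.

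The second, more structural, omission is the selection device. As the paper summarizes, Jeandel--Rolin transplant Ollinger's \emph{jaw/meat} mechanism: a ``box'' group of bars embraces the ``content'' bar from three of the four main directions, thereby singling out one section (one simulated Wang tile) per content bar; handles, fillers, and teeth complete the pattern and give the inside/outside dichotomy that drives the rigidity argument. Your ``selector/copy/separator/glue'' vocabulary gestures at this but does not commit to the jaw mechanism, and your rigidity paragraph does not identify what actually forces the blocks into a lattice. Since you ultimately defer the details to~\cite{jr12}, the sketch is not wrong as a pointer, but as a proof outline it misidentifies where the color matching lives and omits the key jaw/box gadget that makes the count come out to a constant.
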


Jeandel and Rolin also claimed without proof that the number of Wang bars in the above result can be optimized to $35$. Moreover, they found a relation between the tiling problem for a fixed number of Wang bars and the tiling problem for Wang tiles with a fixed color deficiency.

\begin{Theorem}[\cite{jr12}]\label{thm_cd}
    If the tiling problem for $k$ Wang bars is undecidable, then the tiling problem for Wang tiles with color deficiency $k-1$ is undecidable.
\end{Theorem}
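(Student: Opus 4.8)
The plan is to give an effective reduction from the tiling problem for $k$ Wang bars to the tiling problem for Wang tiles of color deficiency $k-1$, so that a decision procedure for the latter would yield one for the former. Given a set $B=\{B_1,\dots,B_k\}$ of Wang bars with $B_i$ of length $\ell_i$, I would build a set $T$ of Wang tiles by slicing each bar into its unit cells and gluing the pieces back together with private colors. Concretely, writing $B_i$ as a left-to-right sequence of $\ell_i$ unit cells, let $w_{i,j}$ (for $1\le j\le\ell_i$) be the Wang tile whose top and bottom colors are the $j$-th top and bottom colors of $B_i$; whose left color is the left color $L_i$ of $B_i$ when $j=1$, and a fresh color $x_{i,j-1}$ otherwise; and whose right color is the right color $R_i$ of $B_i$ when $j=\ell_i$, and the fresh color $x_{i,j}$ otherwise. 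Here the $\sum_{i}(\ell_i-1)$ colors $x_{i,j}$ are chosen pairwise distinct and distinct from every color occurring on $B$. Put $T=\{\,w_{i,j}:1\le i\le k,\ 1\le j\le\ell_i\,\}$.

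I would then show that $T$ tiles the plane if and only if $B$ does. The ``if'' direction is easy: cut every bar of a Wang-bar tiling of the plane into its unit cells; the original matchings are inherited, and every newly exposed internal edge is shared by consecutive cells $w_{i,j}$ and $w_{i,j+1}$ of the same bar, which carry the same color $x_{i,j}$ there, so the result is a valid Wang-tile tiling. For the ``only if'' direction, take any tiling of the plane by $T$; as with any Wang-tile tiling it is supported on a square grid and so has well-defined rows. The key observation is that each fresh color $x_{i,j}$ occurs on exactly two sides in all of $T$: the right side of $w_{i,j}$ and the left side of $w_{i,j+1}$. Consequently, wherever $w_{i,j}$ with $j<\ell_i$ is placed its right neighbour must be $w_{i,j+1}$, and wherever $w_{i,j}$ with $j>1$ is placed its left neighbour must be $w_{i,j-1}$. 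Walking left and right from any tile of a row therefore traces out exactly the cells $w_{i,1},\dots,w_{i,\ell_i}$ of a single bar, stopping on the left at $w_{i,1}$ (whose left color $L_i$ is an original color) and on the right at $w_{i,\ell_i}$ (whose right color $R_i$ is an original color). Hence each row is a concatenation of complete copies of bars; at a seam the right color $R_i$ of one bar equals the left color $L_m$ of the next, which is precisely the condition under which those two bars may be placed side by side; and the matching of the unchanged top and bottom colors between rows is exactly the vertical matching condition for the bars. So re-merging the traced-out cells into bars produces a valid Wang-bar tiling of the plane.

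To finish, one counts: $n(T)=\sum_i\ell_i$, and the colors of $T$ are those of $B$ together with the $\sum_i(\ell_i-1)$ fresh ones, and evaluating the definition of $c(T)$ then gives $cd(T)=k-1$. Since $B\mapsto T$ is effective, composing it with a hypothetical decision procedure for tileability of Wang tile sets of color deficiency $k-1$ would decide whether a set of $k$ Wang bars tiles the plane; as the latter problem is assumed undecidable, so is the former.

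I expect the main difficulty to be the ``only if'' direction: one must verify that an \emph{arbitrary} plane tiling by $T$, not merely a well-structured one, is forced row by row to split into complete bars, which relies on the privateness of the colors $x_{i,j}$ and on the grid structure of Wang-tile tilings. A second, more bookkeeping-type point is pinning down the color count so that the deficiency comes out exactly $k-1$ rather than merely at most $k-1$; this has to be matched carefully to the precise definition of $c(T)$, and may require normalizing the family of Wang bars supplied by the undecidability hypothesis.
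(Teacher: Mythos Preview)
Your proposal is correct and follows essentially the same construction as the paper (and the cited \cite{jr12}): slice each bar into unit tiles glued by fresh private colors, use those private colors to force rows to reassemble into bars, and then count to bound the color deficiency. The one loose end you flag---getting the deficiency to equal $k-1$ rather than merely $\le k-1$---is handled in the paper exactly as you anticipate, by padding $T$ with extra tiles that cannot participate in any tiling but raise the deficiency.
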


In this paper, we improve Theorem \ref{thm_wang_bar_44} considerably by proving Theorem \ref{thm_main}. Our reduction method is significantly different from that of Jeandel and Rolin \cite{jr12}, though still following Ollinger's method in spirit \cite{o09}. As a consequence, our result also implies the undecidability of tiling problem for Wang tiles with a smaller fixed color deficiency.

\begin{Theorem}\label{thm_main}
    The tiling problem for $29$ Wang bars is undecidable.
\end{Theorem}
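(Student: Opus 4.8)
The plan is to reduce from a known undecidable problem via Wang bars, following Ollinger's method in spirit but with a more economical encoding than Jeandel–Rolin. The natural source of undecidability is the halting problem for Turing machines, or equivalently the origin-constrained (or unconstrained) tiling problem for Wang tiles; but to get a *fixed* number of Wang bars we cannot simply translate a machine's transition table into tiles, since that number grows with the machine. Instead, following Ollinger \cite{o09}, I would fix a single universal object — here a universal Turing machine, or better an aperiodic/simulating structure with a bounded description — and encode an arbitrary instance (the input word, or the machine to be simulated) into the *geometry* of a constant-size set of Wang bars. The key mechanism is that a Wang bar has variable length, so a bar of length $n$ can carry $n$ units of information along its horizontal side; by choosing bar lengths and matching color sequences appropriately, one simulates the computation of a fixed universal machine on a variable input using only a fixed repertoire of bar *types*.

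\textbf{The main construction.} Concretely, I would proceed in three layers. First, a \emph{grid layer}: a small fixed set of Wang bars whose only consistent tilings are (roughly) rectangular grids, forcing a coordinate structure on the plane — this is where one pays for synchronizing the left/right ends of bars and for making rows line up vertically. Second, a \emph{simulation layer}: bars encoding the transitions of a fixed universal machine (or of a fixed set of Turing machines with undecidable halting/immortality behaviour), where horizontal color sequences along a bar carry the tape contents of one time step and vertical colors carry the head state between consecutive steps. Third, an \emph{input layer}: the specific instance is fed in either by a bounded number of extra bars whose lengths/colors spell out the input, or — more in the Ollinger style — by letting the instance be the object being simulated so that the *same* $29$ bars decide tiling iff the encoded machine does not halt (equivalently, is immortal). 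Throughout, the count $29$ comes from carefully merging roles: reusing the same bars for grid-forcing and for computation, using colors rather than separate tiles to distinguish states, and exploiting the $45^\circ$-rotated edge convention visible in Figures \ref{fig_wang_set}–\ref{fig_wang_bar} to cut down on corner/boundary bars. I would then invoke Theorem \ref{thm_berger} (or a sharper immortality-type statement) to conclude that deciding whether these $29$ bars tile the plane would decide an undecidable problem.

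\textbf{Where the difficulty lies.} The genuinely hard part is \emph{not} the high-level reduction — that template is standard — but the tile-economy: proving that $29$ bars suffice while still \emph{forcing} the intended structure, i.e. ruling out spurious tilings that evade the simulation (degenerate periodic tilings, shifted rows, bars packed in unintended ways). This requires a delicate case analysis of how bars can abut horizontally and vertically, and a proof that any valid tiling must decompose into the intended grid of simulation rows. A secondary obstacle is making the encoding of an arbitrary instance genuinely fit inside a \emph{constant} number of bar types: the instance size must be absorbed into lengths and color-strings drawn from a fixed alphabet, which constrains how the universal machine is presented (one typically fixes a universal machine with a very small number of states/symbols, or uses a tag-system / Minsky-machine style simulator). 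Getting both the soundness direction (halting $\Rightarrow$ no tiling) and the completeness direction (non-halting $\Rightarrow$ a valid tiling exists, including handling the half-plane boundary gracefully) to go through with the \emph{same} $29$ bars is the crux, and I expect most of the paper's technical weight — and the part where the reduction "significantly differs from Jeandel and Rolin" — to be concentrated exactly there. Finally, Theorem \ref{thm_cd} is then applied as a black box to deduce the color-deficiency $25$ corollary, with no extra work.
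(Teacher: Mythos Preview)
Your proposal misidentifies both the source problem and the reduction mechanism. The paper does \emph{not} simulate any Turing machine, universal or otherwise; there is no computation, no time steps, no tape, no grid-of-configurations. Instead it reduces directly from Wang's domino problem (Theorem~\ref{thm_berger}), exactly as Ollinger did for polyominoes: given an \emph{arbitrary} set $T$ of $n$ Wang tiles over $m$ colors, one constructs a set of $29$ Wang bars whose lengths depend on $n$ and $m$ but whose \emph{number} and color structure are fixed. The instance being encoded is the tile set $T$ itself, not an input word to a machine.

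The central device is a single long bar, the \emph{encoder}, of length $n(2m+1)$, whose north and south edges carry all $n$ Wang tiles of $T$ laid out side by side in unary (one ``section'' per tile, with a distinguished \emph{locator} cell in the middle of each section). A group of seven short bars forming a \emph{selector} is forced, via the locator colors $a_1,a_2$, to point at exactly one section of each encoder above and below it; this is how an arbitrary tile of $T$ is ``chosen'' at each lattice site. Four \emph{linker} bars (two of length $(n-1)(2m+1)+m+1$) transmit the unary-encoded edge color from one selected section to the neighbouring one and can match only when the encoded colors agree (Facts~\ref{fct_align} and~\ref{fct_align_2}, proved by a simple length-arithmetic contradiction modulo $2m+1$). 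One \emph{aligner} bar and $8+8$ unit \emph{filler} bars complete the $29$ and handle the residual gaps. The correctness argument is a short chain of forcing facts showing every tiling must exhibit the encoder/selector/linker backbone and hence simulate a tiling by $T$.

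Your ``universal TM $+$ input layer'' architecture is not what Ollinger's method is, and it would face a real obstacle here: feeding an arbitrary finite input word through a \emph{constant} number of bar types while still forcing a faithful space--time diagram is substantially harder than the problem at hand, and there is no evident way to get the count as low as $29$ along that route. The actual trick is that Berger's theorem has already done the computational heavy lifting; the paper only needs to show that $29$ bars can simulate any Wang tile set, which is a combinatorial packing/forcing argument with no machines in sight.
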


The rest of the paper is organized as follows. Section \ref{sec_main} gives the proof of Theorem \ref{thm_main}. Section \ref{sec_con} concludes with a few remarks.

\section{Proof of Theorem \ref{thm_main}}\label{sec_main}

\subsection{Overview of the Proof}

We will prove Theorem \ref{thm_main} by reduction from Wang's domino problem in this section. Ollinger first introduced a framework for reducing Wang's domino problem to tiling problems with a fixed number of tiles. One key ingredient in Ollinger's framework is a tile called the \textit{meat} polyomino. The meat encodes a set of an arbitrary number of Wang tiles into a single polyomino. Another important ingredient in Ollinger's framework is the mechanism to select an encoded Wang tile from every meat polyomino. The \textit{jaw} polyomino is responsible for this job. Three more types of polyominoes, the \textit{wires}, \textit{fillers}, and \textit{teeth}, are needed to form the complete framework of Ollinger.

To prove Theorem \ref{thm_wang_bar_44}, Jeandel and Rolin followed Ollinger's framework almost exactly. All five kinds of polyominoes have their counterparts in the proof of Jeandel and Rolin, in forms of \textit{gourps of Wang bars}. The counterparts for the meat, jaw, wires, and fillers are called the content, box, handles, and fillers, respectively by Jeandel and Rolin. In particular, the box can embrace a content from $3$ out of $4$ main directions (east, south, west, and north), which resembles the mechanism that the jaw can embrace a meat from $3$ directions. The counterparts for the fifth kind of polyominoes, the teeth, are unnamed by Jeandel and Rolin. More strictly speaking, Jeandel and Rolin employed a sixth kind of group of Wang bars which has no counterpart in Ollinger's framework. They can be viewed as teeth outside the jaws to fill the gaps in terms of Ollinger's framework and terminology. 

In summary, both Ollinger's original framework and the variant of Jeandel and Rolin have a clear distinction between inside and outside areas concerning the jaws or boxes. In our proof of Theorem \ref{thm_main}, we still follow Ollinger's framework loosely. However, we do not have an exact counterpart to the jaw polyomino in our reduction, and there is no distinction between inside and outside. In our reduction, we will construct $6$ kinds of groups of Wang bars: the encoder, the selector, the aligner, the linkers, fillers of group $1$, and fillers of group $2$.

\subsection{Construction of Wang Bars}

\begin{proof}[Proof of Theorem \ref{thm_main}] 
For each set of Wang tiles, we will construct a set of $29$ Wang bars. By assigning unique colors to the top and bottom sides of some of the Wang bars, these $29$ Wang bars must be used in groups, forming $6$ kinds of groups of Wang bars, which will be introduced one by one in the following paragraphs. We will take the set of $3$ Wang tiles in Figure \ref{fig_wang_set} as a concrete example to describe our reduction, and our method can be applied to the general cases naturally.

The first group is the \textit{encoder} which is illustrated in Figure \ref{fig_encoder}. The encoder is a single Wang bar that encodes all the Wang tiles of a set. To encode the set of Wang tiles in Figure \ref{fig_wang_set}, our encoder has a length of $27$ consisting of $3$ \textit{sections} of length $9$. Each section simulates a Wang tile. An enlarged view of the left-most section is also depicted in Figure \ref{fig_encoder}, which simulates a Wang tile tilted by $\pi/4$. The square in the middle of each section (illustrated in dark orange in Figure \ref{fig_encoder}) is called a \textit{locator}. The top and bottom sides of each locator are assigned to colors labeled $a_1$ and $a_2$, respectively. The locator separates each section into $2$ parts of length $4$, called the \textit{left arm} and \textit{right arm}. For the left-most section, the $4$ unit segments of the top side of the left arm are assigned to colors either $a_3$ and $a_5$, and exactly one unit segment is assigned to $a_3$. Because there are exactly $4$ different colors in the set of Wang tiles in Figure \ref{fig_wang_set}, the position of the color $a_3$ encodes the $4$ colors in a unary way. More precisely, reading the color sequences from left to right, $a_3a_5a_5a_5$, $a_5a_3a_5a_5$, $a_5a_5a_3a_5$ and $a_5a_5a_5a_3$ encode red, green, blue and yellow, respectively. The top side of the right arm is colored by either $a_3$ or $a_5$ in exactly the same way as the top side of the left arm. The bottom sides of the left arm and right arm are colored in a way almost the same as the top sides, except that the colors $a_3$ and $a_5$ are replaced by $a_4$ and $a_6$, respectively. Because the positions of $a_3$ and $a_4$ encode the colors of Wang tiles, we called them \textit{encoding colors}. Finally, the left and right sides of the encoder are colored $z$.


\begin{figure}[H]
\begin{center}
\begin{tikzpicture}[scale=0.5]

\foreach \x in {30}
\foreach \y in {5.5}
{
\draw [ fill=green!80] (\x+0,0+\y)--(\x+2,2+\y)--(\x+2,0+\y)--(\x+0,0+\y);
\draw [ fill=red!80] (\x+0,0+\y)--(\x+2,-2+\y)--(\x+2,0+\y)--(\x+0,0+\y);
\draw [ fill=red!80] (\x+4,0+\y)--(\x+2,2+\y)--(\x+2,0+\y)--(\x+4,0+\y);
\draw [ fill=yellow!80] (\x+4,0+\y)--(\x+2,-2+\y)--(\x+2,0+\y)--(\x+4,0+\y);

}

\draw [fill=orange!10] (0,0)--(27,0)--(27,1)--(0,1)--(0,0);

\foreach \x in {1,...,26}
{
\draw (\x,1)--(\x,0);
}

\foreach \x in {4,13,22}
{
\draw [fill=orange!80] (\x,1)--(\x,0)--(\x+1,0)--(\x+1,1)--(\x,1);
}

\draw [fill=orange!10]  (0,4)--(27,4)--(27,7)--(0,7)--(0,4);

\foreach \x in {1,...,8}
{
\draw (3*\x,4)--(3*\x,7);
}

\draw [<->, line width=3, ] (27.5,5.5)--(29.5,5.5);

\draw [very thick] (0,1.1)--(0,3.9);
\draw [very thick] (9,1.1)--(27,3.9);

\draw (0,4)--(1.5,5.5)--(27,5.5);
\draw (0,7)--(1.5,5.5); 

\foreach \x in {0,2,3,6,7,8}
{
\node at (3*\x+1.5,6.5) {$a_5$};
}

\foreach \x in {1,5}
{
\node at (3*\x+1.5,6.5) {$a_3$};
}
\foreach \x in {0,8}
{
\node at (3*\x+1.5,4.5) {$a_4$};
}

\foreach \x in {1,2,3,5,6,7}
{
\node at (3*\x+1.5,4.5) {$a_6$};
}

\node at (13.5,4.5) {$a_2$};
\node at (13.5,6.5) {$a_1$};

\node at (0.5,5.5) {$z$};

\draw [orange, very thick] (12,4)--(15,4)--(15,7)--(12,7)--(12,4);
\draw [red, very thick] (15,7)--(27,7);
\draw [red, very thick] (0,4)--(12,4);
\draw [yellow, very thick] (15,4)--(27,4);
\draw [green, very thick] (0,7)--(12,7);

\draw (27,0)--(26.5,0.5)--(27,1); \node at (27.3,0.5) {$z$};
\draw (0,0)--(0.5,0.5)--(0,1); \node at (-0.3,0.5) {$z$};
\draw (0.5,0.5)--(26.5,0.5);

\end{tikzpicture}
\end{center}
\caption{The encoder.}\label{fig_encoder}
\end{figure}
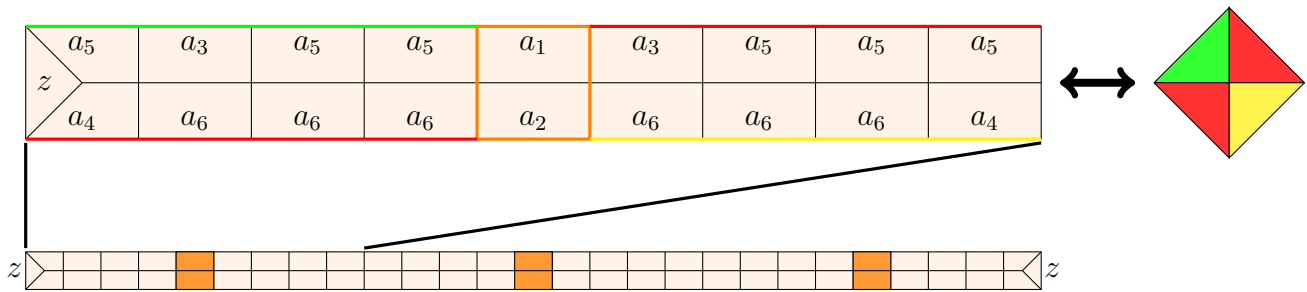

In general, for a set of $n$ Wang tiles with $m$ different colors, we need to construct an encoder Wang bar of length $n(2m+1)$, consisting of $n$ sections of length $2m+1$.

The second group, called \textit{selector}, consists of $7$ Wang bars as illustrated in Figure \ref{fig_selector}. One of the $7$ Wang bars has length $9$, the same as the length of a section of the encoder, and all the other $6$ Wang bars have length $1$. Each pair of the $6$ internal unit segments (illustrated in dark blue in Figure \ref{fig_encoder}) are assigned a unique color, hence the $7$ Wang bars must appear in groups. The top Wang bar and bottom Wang bar of the selector are called \textit{top pointer} and \textit{bottom pointer}, respectively. The top side of the top pointer is colored $a_2$, and the bottom side of the bottom pointer is colored $a_1$. Like the sections of the encoder, the two parts separated by the central square of the longest Wang bar of the selector are called two \textit{arms}. The top sides of the two arms are colored $a_7$, and the bottom side of them are colored $a_8$.

The leftmost and rightmost sides of the selector are colored $z$. The left sides of the second and sixth Wang bars (counting from the top) are colored $x$, and their right sides are colored $y$. All other unlabelled vertical sides (i.e. vertical sides of the first, third, fifth, and seventh Wang bars of the selector) are colored $0$, and this convention also applies to Figure \ref{fig_linker}, Figure \ref{fig_filler_1} and Figure \ref{fig_filler_2}. 

In the general case, we extend the length of arms of the selector from $4$ in this example to $m$, where $m$ is the number of different colors of a set of Wang tiles. In other words, the long Wang bar of the selector has length $2m+1$.


\begin{figure}[H]
\begin{center}
\begin{tikzpicture}[scale=1.5]

\draw [fill=blue!10] (0,0)--(4,0)--(4,-3)--(5,-3)--(5,0)--(9,0)--(9,1)--(5,1)--(5,4)--(4,4)--(4,1)--(0,1)--(0,0);

\draw (0,0)--(9,0)--(9,1)--(0,1)--(0,0);

\foreach \x in {1,...,8}
{
\draw (\x,1)--(\x,0);
}

\foreach \y in {1,2,3}
{
\draw (4,\y)--(4,\y+1)--(5,\y+1)--(5,\y);
}

\foreach \y in {0,-1,-2}
{
\draw (4,\y)--(4,\y-1)--(5,\y-1)--(5,\y);
}

\foreach \x in {0,1,2,3,5,6,7,8}
{
\node at (\x+0.5,0.2) {$a_8$};
\node at (\x+0.5,0.8) {$a_7$};
}
\draw (0,0)--(0.5,0.5)--(8.5,0.5)--(9,0); \draw (0,1)--(0.5,0.5); \draw (8.5,0.5)--(9,1);
\node at (0.2,0.5) {$z$}; \node at (8.8,0.5) {$z$};

\draw (4,-3)--(4.5,-2.5)--(5,-3); \draw (4,4)--(4.5,3.5)--(5,4);
\node at (4.5,-2.8) {$a_1$}; \node at (4.5,3.8) {$a_2$};

\foreach \y in {-2,2}
{
\draw (4,\y)--(5,\y+1);
\draw (4,\y+1)--(5,\y);
\node at (4.2,\y+0.5) {$x$};
\node at (4.8,\y+0.5) {$y$};
}

\foreach \y in {-2,...,3}
{
\draw [color=blue, very thick] (4,\y)--(5,\y);
}

\end{tikzpicture}
\end{center}
\caption{The selector.}\label{fig_selector}
\end{figure}
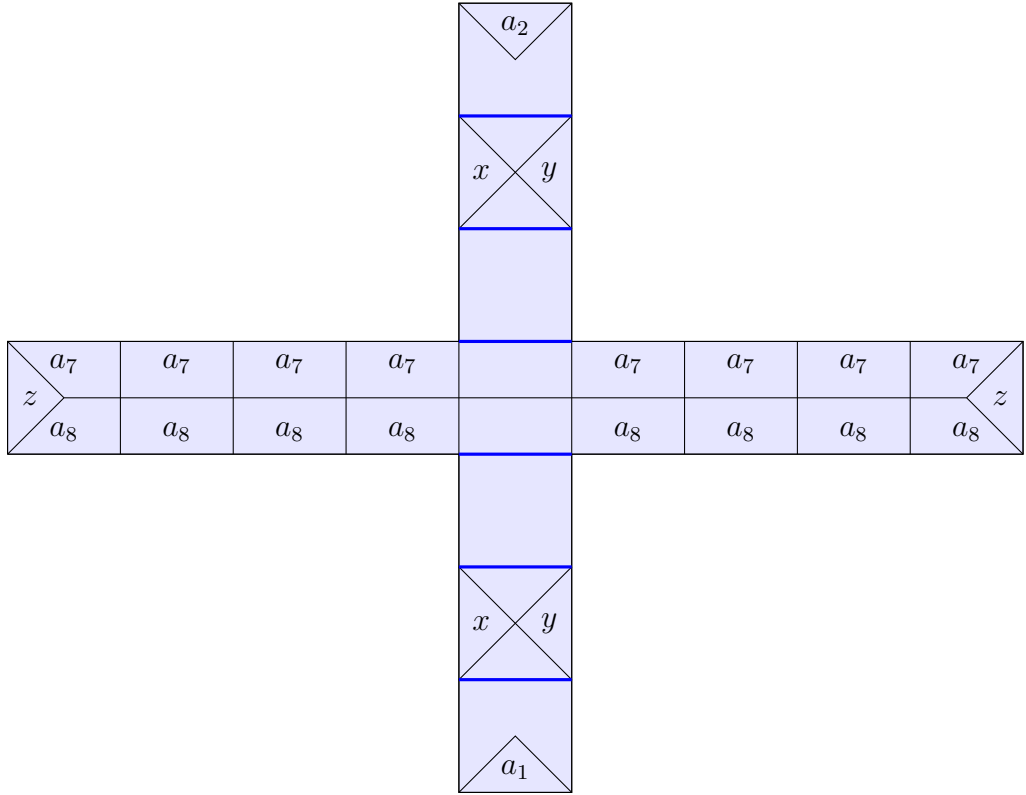

As we will see later, the bottom pointer of a selector can only be adjacent to a locator of an encoder from above, and the top pointer can only be adjacent to a locator from below. In this way, the selector chooses one section (a simulated Wang tile) from every encoder.

The third group is the \textit{aligner} (see Figure \ref{fig_aligner}), which consists of a single Wang bar of length $9$. In the general case, the aligner has length $2m+1$. All unit segments of the top side are colored $a_5$, and all unit segments of the bottom side are colored $a_6$. Both the left and right sides are colored $z$.


\begin{figure}[H]
\begin{center}
\begin{tikzpicture}[scale=1.5]

\draw [fill=yellow!10] (0,0)--(9,0)--(9,1)--(0,1)--(0,0);

\foreach \x in {1,...,8}
{
\draw (\x,1)--(\x,0);
}

\draw (0,0)--(0.5,0.5)--(8.5,0.5)--(9,0); \draw (0,1)--(0.5,0.5); \draw (8.5,0.5)--(9,1);
\node at (0.2,0.5) {$z$}; \node at (8.8,0.5) {$z$};

\foreach \x in {0,...,8}
{
\node at (\x+0.5,0.2) {$a_6$};
\node at (\x+0.5,0.8) {$a_5$};
}

\end{tikzpicture}
\end{center}
\caption{The aligner.}\label{fig_aligner}
\end{figure}
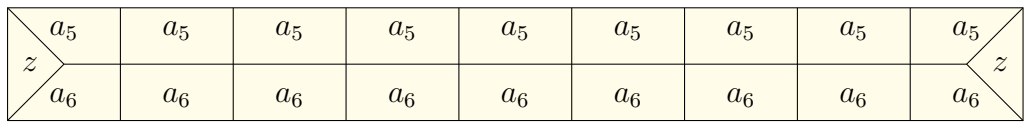

There are several features in common in the three groups of Wang bars, the encoder, selector, and aligner, we have introduced so far. First, they all have a length of $2m+1$, or a multiple of $2m+1$ in the general case. Second, the color $z$ only appears on the left or right sides in these three groups. Third, the top and bottom sides are all colored by $a_i$ $(1\leq i \leq 8)$. Together they will form a \textit{backbone} structure of the overall tiling (as illustrated in Figure \ref{fig_pattern} later).

The fourth group (the \textit{linkers}) is illustrated in Figure \ref{fig_linker}. There are $4$ different Wang bars in this group. Two of them are of length $1$ and the other two are of length $23$. In the general case, the short ones remain the same, while the long ones are of length $(n-1)(2m+1)+m+1$. Because the colors $b_1$ and $b_2$ only appear in these four Wang bars, they must form in groups in two different ways as illustrated in Figure \ref{fig_linker}.


\begin{figure}[H]
\begin{center}
\begin{tikzpicture}[scale=1.5]

\draw [fill=gray!10] (0,5)--(3.3,5)--(3.6,5.6)--(3.3,6)--(1,6)--(1,7)--(0,7)--(0,5); 
\draw [fill=gray!10] (3.5,5)--(6,5)--(6,4)--(7,4)--(7,6)--(3.5,6)--(3.8,5.6)--(3.5,5);
\draw (7,4)--(6,5)--(6,4)--(7,5)--(7,4)--(6,4); \draw (6,5)--(7,5);
\draw (0,7)--(1,7)--(0,6)--(0,7)--(1,6)--(1,7); \draw (1,6)--(0,6);

\foreach \x in {1,...,6}
{
\draw (\x,5)--(\x,6);
}


\begin{scope}
    \clip (0,5)--(3.3,5)--(3.6,5.6)--(3.3,6)--(0,6)--(0,5); 
    \draw (0,5)--(0.5,5.5)--(4,5.5);
\end{scope}
\draw (0,6)--(0.5,5.5);

\begin{scope}
    \clip (3.5,5)--(7,5)--(7,6)--(3.5,6)--(3.8,5.6)--(3.5,5);
    \draw (3,5.5)--(6.5,5.5)--(7,5);
\end{scope}
\draw (6.5,5.5)--(7,6);

\foreach \x in {1,2,2.7,3.3,4,5,6}
{
\node at (\x+0.5,5.8) {$b_3$};
}
\foreach \x in {0}
{
\node at (\x+0.5,5.8) {$b_1$};
\node at (\x+0.5,6.2) {$b_1$};
\node at (\x+0.5,6.8) {$a_4$};
\node at (\x+0.2,5.5) {$y$};
}

\foreach \x in {0,1,2,3.2-0.5,3.8-0.5,4,5}
{
\node at (\x+0.5,5.2) {$b_4$};
}
\foreach \x in {6}
{
\node at (\x+0.5,5.2) {$b_2$};
\node at (\x+0.5,4.8) {$b_2$};
\node at (\x+0.5,4.2) {$a_3$};
\node at (\x+0.8,5.5) {$x$};
}


\draw [fill=gray!10] (0,0)--(1,0)--(1,1)--(3.3,1)--(3.6,1.6)--(3.3,2)--(0,2)--(0,0);

\draw [fill=gray!10] (3.5,1)--(7,1)--(7,3)--(6,3)--(6,2)--(3.5,2)--(3.8,1.6)--(3.5,1);

\foreach \x in {1,...,6}
{
\draw (\x,1)--(\x,2);
}
\draw (0,1)--(1,1); \draw (6,2)--(7,2);


\begin{scope}
    \clip (0,1)--(3.3,1)--(3.6,1.6)--(3.3,2)--(0,2)--(0,1);   
    \draw (0,1)--(0.5,1.5)--(4,1.5);
\end{scope}

\begin{scope}
    \clip (3.5,1)--(7,1)--(7,2)--(3.5,2)--(3.8,1.6)--(3.5,1);  
    \draw (3,1.5)--(6.5,1.5)--(7,1);
\end{scope}

\draw (6.5,1.5)--(7,2); \draw (0.5,1.5)--(0,2);

\foreach \x in {0,1,2,2.7,3.3,4,5}
{
\node at (\x+0.5,1.8) {$b_3$};
}
\foreach \x in {6}
{
\node at (\x+0.5,1.8) {$b_1$};
\node at (\x+0.5,2.2) {$b_1$};
\node at (\x+0.5,2.8) {$a_4$};
}

\foreach \x in {1,2,3.2-0.5,3.8-0.5,4,5,6}
{
\node at (\x+0.5,1.2) {$b_4$};
}
\foreach \x in {0}
{
\node at (\x+0.5,1.2) {$b_2$};
\node at (\x+0.5,0.8) {$b_2$};
\node at (\x+0.5,0.2) {$a_3$};
}

\draw (0,0)--(1,1); \draw (1,0)--(0,1);
\draw (6,2)--(7,3); \draw (6,3)--(7,2);

\node at (0.2,1.5) {$y$};
\node at (6.8,1.5) {$x$};

\end{tikzpicture}
\end{center}
\caption{The linkers.}\label{fig_linker}
\end{figure}
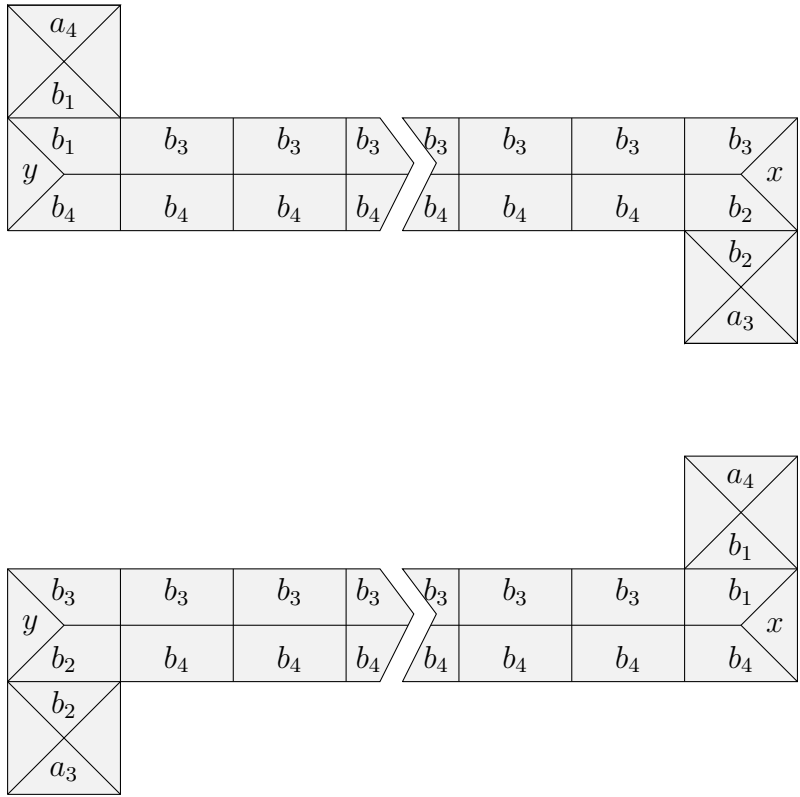

The top and bottom sides of the first Wang bar of length $1$ are colored $a_4$ and $b_1$, respectively. The top and bottom sides of the second Wang bar of length $1$ are colored $b_2$ and $a_3$, respectively. The vertical sides of the two short Wang bars are colored $0$, their labels are omitted in Figure \ref{fig_linker} by our convention.  The top and bottom sides of the first Wang bar of length $23$ are colored $b_1(b_3)^{22}$ and $(b_4)^{22}b_2$, respectively. The top and bottom sides of the second Wang bar of length $23$ are colored $(b_3)^{22}b_1$ and $b_2(b_4)^{22}$, respectively. For the two long Wang bars, the left sides are colored $y$, and the right sides are colored $x$.

So a linker connects an encoding color $a_3$ to another encoding color $a_4$. As we have mentioned earlier, the two colors $a_3$ and $a_4$ are critical in encoding the colors of Wang tiles in each section of an encoder. The linkers can connect two sections from two encoders only if the corresponding sides of the two sections encode the same color, as we will explain in detail later.

All the remaining Wang bars are primarily used to fill the gaps left by the previous four groups of Wang bars in forming a tiling of the plane. So we call them the \textit{fillers}. According to their slightly different roles in filling the gaps, we further divide them into two groups, which make up the fifth and sixth groups of our complete set of Wang bars. Note that the fillers remain unchanged in the general case. In other words, their construction does not depend on $n$ or $m$.


\begin{figure}[H]
\begin{center}
\begin{tikzpicture}[scale=1]

\foreach \x in {0,2,4,6, 9,11,13,15}
{
\draw[fill=green!10]  (\x,0)--(\x+1,0)--(\x+1,1)--(\x,1)--(\x,0);
\draw (\x,0)--(\x+1,1); \draw (\x,1)--(\x+1,0);
}
\foreach \x in {0,...,3}
{
\node at (2*\x+0.5,0.2) {$b_3$};
}

\foreach \x in {4,...,7}
{
\node at (2*\x+1.5,0.8) {$b_4$};
}

\node at (0.5,0.8) {$a_2$};
\node at (2.5,0.8) {$a_4$};
\node at (4.5,0.8) {$a_6$};
\node at (6.5,0.8) {$a_8$};

\node at (9.5,0.2) {$a_1$};
\node at (11.5,0.2) {$a_3$};
\node at (13.5,0.2) {$a_5$};
\node at (15.5,0.2) {$a_7$};

\end{tikzpicture}
\end{center}
\caption{The fillers (Group 1).}\label{fig_filler_1}
\end{figure}
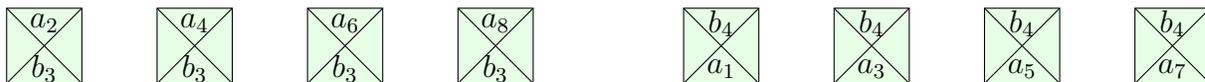

The fifth group (group $1$ of the fillers) is illustrated in Figure \ref{fig_filler_1}. There are $8$ Wang bars of length $1$ in this group. The bottom sides of the first $4$ Wang bars are colored $b_3$, and their top sides are $a_2$, $a_4$, $a_6$, and $a_8$, respectively. The top sides of the rest $4$ Wang bars are $b_4$, and their bottom sides are $a_1$, $a_3$, $a_5$ and $a_7$, respectively. The vertical sides of all the $8$ Wang bars are $0$. This group is used to fill the gaps between the linkers and the backbone (recall that the backbone is formed by the encoders, selectors, and aligners).


\begin{figure}[H]
\begin{center}
\begin{tikzpicture}[scale=1]

\foreach \x in {0,2,5,7}
\foreach \y in {0}
{
\draw [fill=red!10] (\x,\y)--(\x+1,\y)--(\x+1,\y+3)--(\x,\y+3)--(\x,\y);
}

\foreach \x in {0,2,5,7}
\foreach \y in {0,1,2}
{
\draw (\x,\y)--(\x+1,\y)--(\x+1,\y+1)--(\x,\y+1)--(\x,\y);
}

\foreach \x in {0,2,5,7}
\foreach \y in {0,2}
{
\draw (\x,\y+1)--(\x+1,\y); \draw (\x,\y)--(\x+1,\y+1);
}
\foreach \x in {0,5}
{
\draw (\x,1)--(\x+1,2); 
\draw (\x,2)--(\x+1,1);
}

\foreach \x in {2,7}
\foreach \y in {1}
{
\draw (\x,\y)--(\x+1,\y)--(\x+1,\y+1)--(\x,\y+1)--(\x,\y);
\draw (\x,1)--(\x+1,2); 
\draw (\x,2)--(\x+1,1);
}


\node at (0.5,2.2) {$b_5$};
\node at (0.5,1.8) {$b_5$};
\node at (0.5,1.2) {$b_6$};
\node at (0.5,0.8) {$b_6$};
\node at (0.5,0.2) {$a_5$};
\node at (0.5,2.8) {$a_8$};

\node at (0.2,1.5) {$x$};
\node at (0.8,1.5) {$x$};

\node at (5.5,2.2) {$b_7$};
\node at (5.5,1.8) {$b_7$};
\node at (5.5,1.2) {$b_8$};
\node at (5.5,0.8) {$b_8$};
\node at (5.5,0.2) {$a_7$};
\node at (5.5,2.8) {$a_6$};

\node at (5.2,1.5) {$x$};
\node at (5.8,1.5) {$x$};

\node at (2.5,0.8) {$b_6$};
\node at (2.5,0.2) {$a_5$};
\node at (2.5,2.8) {$a_8$};
\node at (2.5,2.2) {$b_5$};
\node at (2.5,1.8) {$b_5$};
\node at (2.5,1.2) {$b_6$};
\node at (2.2,1.5) {$y$};
\node at (2.8,1.5) {$y$};

\node at (7.5,2.2) {$b_7$};
\node at (7.5,0.8) {$b_8$};
\node at (7.5,0.2) {$a_7$};
\node at (7.5,2.8) {$a_6$};
\node at (7.5,1.8) {$b_7$};
\node at (7.5,1.2) {$b_8$};
\node at (7.2,1.5) {$y$};
\node at (7.8,1.5) {$y$};

\end{tikzpicture}
\end{center}
\caption{The fillers (Group 2).}\label{fig_filler_2}
\end{figure}
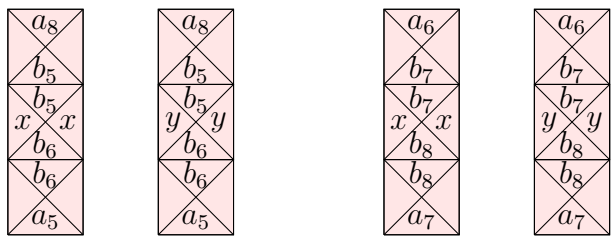

The last group (group $2$ of the fillers) is illustrated in Figure \ref{fig_filler_2}. There are also $8$ different Wang bars of length $1$ in this group. But unlike the previous group, the Wang bars in this group must appear in batches of three forming vertical rectangles of height $3$. The internal colors $b_5$, $b_6$, $b_7$, and $b_8$ enforce the $8$ Wang bars to form $4$ kinds of vertical rectangles. Two of the vertical rectangles are colored $a_8$ on the top side and $a_5$ at the bottom side. They differ only on the left and right sides of the middle Wang bar, where they are colored either both $x$ or both $y$. The other two vertical rectangles are colored $a_6$ on the top side and $a_7$ at the bottom side. The left and right sides of their middle Wang bar are also colored either both $x$ or both $y$. All other unlabelled sides in Figure \ref{fig_filler_2} are colored $0$. This group fills the gaps between the backbones.

\subsection{Tiling Pattern}

We have finished introducing all the $6$ groups of Wang bars, which add up to a total of $29$ Wang bars. To complete the reduction, we need to show that to tile the plane with this set of $29$ Wang bars, we must follow the pattern illustrated in Figure \ref{fig_pattern}. In Figure \ref{fig_pattern}, the tiles in orange, blue, yellow, gray, green, and red represent the encoders, selectors, aligners, linkers, fillers of group $1$ and fillers of group $2$, respectively. In this pattern, exactly one section (an emulated Wang tile) of each encoder is selected by the pointers of two selectors from above and below. The selected section form a lattice in the plane. Every selected sections must match its four neighboring selected sections by the linkers. Overall, this pattern simulates a tiling of a set of Wang tiles. The pattern in Figure \ref{fig_pattern} is equivalent to the partial tiling of Wang tiles illustrated in Figure \ref{fig_wang_region}, where all the Wang tiles are tilted by $\pi/4$. 


\begin{figure}[H]
\begin{center}
\begin{tikzpicture}[scale=0.2]

\foreach \x in {0,54}
\foreach \y in {0,8}
{
\draw [fill=blue!30] (\x+0,0+\y)--(\x+0,-3+\y)--(\x+1,-3+\y)--(\x+1,0+\y)--(\x+5,0+\y)--(\x+5,1+\y)--(\x+1,1+\y)--(\x+1,4+\y)--(\x+0,4+\y)--(\x+0,1+\y)--(\x+-4,1+\y)--(\x+-4,0+\y)--(\x+0,0+\y);
}

\foreach \x in {27,81}
\foreach \y in {-4,4,12}
{
\draw [fill=blue!30] (\x+0,0+\y)--(\x+0,-3+\y)--(\x+1,-3+\y)--(\x+1,0+\y)--(\x+5,0+\y)--(\x+5,1+\y)--(\x+1,1+\y)--(\x+1,4+\y)--(\x+0,4+\y)--(\x+0,1+\y)--(\x+-4,1+\y)--(\x+-4,0+\y)--(\x+0,0+\y);
}


\foreach \x in {-18}
\foreach \y in {0}
{
\draw [fill=orange!30] (\x+14,-3+\y)--(\x+23,-3+\y)--(\x+23,-4+\y)--(\x+14,-4+\y);
}

\foreach \x in {-9}
\foreach \y in {8}
{
\draw [fill=orange!30] (\x+5,-3+\y)--(\x+23,-3+\y)--(\x+23,-4+\y)--(\x+5,-4+\y);
}

\foreach \x in {0}
\foreach \y in {16}
{
\draw [fill=orange!30] (\x+-4,-3+\y)--(\x+23,-3+\y)--(\x+23,-4+\y)--(\x+-4,-4+\y)--(\x+-4,-3+\y);
}

\foreach \x in {9}
\foreach \y in {4}
{
\draw [fill=orange!30] (\x+-4,-3+\y)--(\x+23,-3+\y)--(\x+23,-4+\y)--(\x+-4,-4+\y)--(\x+-4,-3+\y);
}
\foreach \x in {18}
\foreach \y in {12}
{
\draw [fill=orange!30] (\x+-4,-3+\y)--(\x+23,-3+\y)--(\x+23,-4+\y)--(\x+-4,-4+\y)--(\x+-4,-3+\y);
}

\foreach \x in {36}
\foreach \y in {8}
{
\draw [fill=orange!30] (\x+-4,-3+\y)--(\x+23,-3+\y)--(\x+23,-4+\y)--(\x+-4,-4+\y)--(\x+-4,-3+\y);
}
\foreach \x in {54}
\foreach \y in {0}
{
\draw [fill=orange!30] (\x+-4,-3+\y)--(\x+23,-3+\y)--(\x+23,-4+\y)--(\x+-4,-4+\y)--(\x+-4,-3+\y);
}
\foreach \x in {45}
\foreach \y in {16}
{
\draw [fill=orange!30] (\x+-4,-3+\y)--(\x+23,-3+\y)--(\x+23,-4+\y)--(\x+-4,-4+\y)--(\x+-4,-3+\y);
}

\foreach \x in {81}
\foreach \y in {4}
{
\draw [fill=orange!30] (\x+5,-4+\y)--(\x+-4,-4+\y)--(\x+-4,-3+\y)--(\x+5,-3+\y);
}

\foreach \x in {63}
\foreach \y in {12}
{
\draw [fill=orange!30] (\x+-4,-3+\y)--(\x+23,-3+\y)--(\x+23,-4+\y)--(\x+-4,-4+\y)--(\x+-4,-3+\y);
}


\foreach \x in {3}
\foreach \y in {0}
{
\draw [fill=gray!30] (\x+1,-3+\y)--(\x+2,-3+\y)--(\x+2,-2+\y)--(\x+24,-2+\y)--(\x+24,0+\y)--(\x+23,0+\y)--(\x+23,-1+\y)--(\x+1,-1+\y)--(\x+1,-3+\y);
}
\foreach \x in {30}
\foreach \y in {4}
{
\draw [fill=gray!30] (\x+1,-3+\y)--(\x+2,-3+\y)--(\x+2,-2+\y)--(\x+24,-2+\y)--(\x+24,0+\y)--(\x+23,0+\y)--(\x+23,-1+\y)--(\x+1,-1+\y)--(\x+1,-3+\y);
}
\foreach \x in {57}
\foreach \y in {8}
{
\draw [fill=gray!30] (\x+1,-3+\y)--(\x+2,-3+\y)--(\x+2,-2+\y)--(\x+24,-2+\y)--(\x+24,0+\y)--(\x+23,0+\y)--(\x+23,-1+\y)--(\x+1,-1+\y)--(\x+1,-3+\y);
}

\foreach \x in {2}
\foreach \y in {8}
{
\draw [fill=gray!30] (\x+1,-3+\y)--(\x+2,-3+\y)--(\x+2,-2+\y)--(\x+24,-2+\y)--(\x+24,0+\y)--(\x+23,0+\y)--(\x+23,-1+\y)--(\x+1,-1+\y)--(\x+1,-3+\y);
}
\foreach \x in {29}
\foreach \y in {12}
{
\draw [fill=gray!30] (\x+1,-3+\y)--(\x+2,-3+\y)--(\x+2,-2+\y)--(\x+24,-2+\y)--(\x+24,0+\y)--(\x+23,0+\y)--(\x+23,-1+\y)--(\x+1,-1+\y)--(\x+1,-3+\y);
}

\foreach \x in {54}
\foreach \y in {0}
{
\draw [fill=gray!30] (\x+1,-3+\y)--(\x+2,-3+\y)--(\x+2,-2+\y)--(\x+24,-2+\y)--(\x+24,0+\y)--(\x+23,0+\y)--(\x+23,-1+\y)--(\x+1,-1+\y)--(\x+1,-3+\y);
}


\foreach \x in {28}
\foreach \y in {0}
{
\draw [fill=gray!30] (\x+1,-2+\y)--(\x+23,-2+\y)--(\x+23,-3+\y)--(\x+24,-3+\y)--(\x+24,-1+\y)--(\x+2,-1+\y)--(\x+2,0+\y)--(\x+1,0+\y)--(\x+1,-2+\y);
}
\foreach \x in {55}
\foreach \y in {4}
{
\draw [fill=gray!30] (\x+1,-2+\y)--(\x+23,-2+\y)--(\x+23,-3+\y)--(\x+24,-3+\y)--(\x+24,-1+\y)--(\x+2,-1+\y)--(\x+2,0+\y)--(\x+1,0+\y)--(\x+1,-2+\y);
}

\foreach \x in {0}
\foreach \y in {4}
{
\draw [fill=gray!30] (\x+1,-2+\y)--(\x+23,-2+\y)--(\x+23,-3+\y)--(\x+24,-3+\y)--(\x+24,-1+\y)--(\x+2,-1+\y)--(\x+2,0+\y)--(\x+1,0+\y)--(\x+1,-2+\y);
}
\foreach \x in {27}
\foreach \y in {8}
{
\draw [fill=gray!30] (\x+1,-2+\y)--(\x+23,-2+\y)--(\x+23,-3+\y)--(\x+24,-3+\y)--(\x+24,-1+\y)--(\x+2,-1+\y)--(\x+2,0+\y)--(\x+1,0+\y)--(\x+1,-2+\y);
}
\foreach \x in {54}
\foreach \y in {12}
{
\draw [fill=gray!30] (\x+1,-2+\y)--(\x+23,-2+\y)--(\x+23,-3+\y)--(\x+24,-3+\y)--(\x+24,-1+\y)--(\x+2,-1+\y)--(\x+2,0+\y)--(\x+1,0+\y)--(\x+1,-2+\y);
}

\foreach \x in {3}
\foreach \y in {12}
{
\draw [fill=gray!30] (\x+1,-2+\y)--(\x+23,-2+\y)--(\x+23,-3+\y)--(\x+24,-3+\y)--(\x+24,-1+\y)--(\x+2,-1+\y)--(\x+2,0+\y)--(\x+1,0+\y)--(\x+1,-2+\y);
}

\foreach \x in {0,9,27,36}
\foreach \y in {0}
{
\draw [fill=yellow!30] (\x+5,-3+\y)--(\x+14,-3+\y)--(\x+14,-4+\y)--(\x+5,-4+\y)--(\x+5,-3+\y);
}
\foreach \x in {27,36,54,63}
\foreach \y in {4}
{
\draw [fill=yellow!30] (\x+5,-3+\y)--(\x+14,-3+\y)--(\x+14,-4+\y)--(\x+5,-4+\y)--(\x+5,-3+\y);
}
\foreach \x in {9,54,63}
\foreach \y in {8}
{
\draw [fill=yellow!30] (\x+5,-3+\y)--(\x+14,-3+\y)--(\x+14,-4+\y)--(\x+5,-4+\y)--(\x+5,-3+\y);
}
\foreach \x in {0,36}
\foreach \y in {12}
{
\draw [fill=yellow!30] (\x+5,-3+\y)--(\x+14,-3+\y)--(\x+14,-4+\y)--(\x+5,-4+\y)--(\x+5,-3+\y);
}
\foreach \x in {27,63}
\foreach \y in {16}
{
\draw [fill=yellow!30] (\x+5,-3+\y)--(\x+14,-3+\y)--(\x+14,-4+\y)--(\x+5,-4+\y)--(\x+5,-3+\y);
}

\foreach \x in {0,1,2,27,51,52,77,78,79}
\foreach \y in {0}
{
\draw [fill=red!30] (\x+1,\y)--(\x+2,\y)--(\x+2,-3+\y)--(\x+1,-3+\y)--(\x+1,\y);
}

\foreach \x in {23,24,25,27,28,29,54,78,79}
\foreach \y in {4}
{
\draw [fill=red!30] (\x+1,\y)--(\x+2,\y)--(\x+2,-3+\y)--(\x+1,-3+\y)--(\x+1,\y);
}

\foreach \x in {0,1,25,50,51,52,54,55,56}
\foreach \y in {8}
{
\draw [fill=red!30] (\x+1,\y)--(\x+2,\y)--(\x+2,-3+\y)--(\x+1,-3+\y)--(\x+1,\y);
}

\foreach \x in {0,1,2,27,28,52,77,78,79}
\foreach \y in {12}
{
\draw [fill=red!30] (\x+1,\y)--(\x+2,\y)--(\x+2,-3+\y)--(\x+1,-3+\y)--(\x+1,\y);
}

\foreach \x in {0,...,21}
\foreach \y in {0,14}
{
\draw [fill=green!30] (\x+5,-3+\y)--(\x+6,-3+\y)--(\x+6,-2+\y)--(\x+5,-2+\y)--(\x+5,-3+\y);
}
\foreach \x in {24,...,45}
\foreach \y in {0,10}
{
\draw [fill=green!30] (\x+5,-3+\y)--(\x+6,-3+\y)--(\x+6,-2+\y)--(\x+5,-2+\y)--(\x+5,-3+\y);
}
\foreach \x in {51,...,72}
\foreach \y in {0,4,14}
{
\draw [fill=green!30] (\x+5,-3+\y)--(\x+6,-3+\y)--(\x+6,-2+\y)--(\x+5,-2+\y)--(\x+5,-3+\y);
}

\foreach \x in {-1,...,20}
\foreach \y in {2,8,12}
{
\draw [fill=green!30] (\x+5,-3+\y)--(\x+6,-3+\y)--(\x+6,-2+\y)--(\x+5,-2+\y)--(\x+5,-3+\y);
}
\foreach \x in {25,...,46}
\foreach \y in {2,14}
{
\draw [fill=green!30] (\x+5,-3+\y)--(\x+6,-3+\y)--(\x+6,-2+\y)--(\x+5,-2+\y)--(\x+5,-3+\y);
}
\foreach \x in {50,...,71}
\foreach \y in {2,12}
{
\draw [fill=green!30] (\x+5,-3+\y)--(\x+6,-3+\y)--(\x+6,-2+\y)--(\x+5,-2+\y)--(\x+5,-3+\y);
}

\foreach \x in {-4,...,17}
\foreach \y in {4}
{
\draw [fill=green!30] (\x+5,-3+\y)--(\x+6,-3+\y)--(\x+6,-2+\y)--(\x+5,-2+\y)--(\x+5,-3+\y);
}
\foreach \x in {27,...,48}
\foreach \y in {4}
{
\draw [fill=green!30] (\x+5,-3+\y)--(\x+6,-3+\y)--(\x+6,-2+\y)--(\x+5,-2+\y)--(\x+5,-3+\y);
}

\foreach \x in {-3,...,18}
\foreach \y in {6}
{
\draw [fill=green!30] (\x+5,-3+\y)--(\x+6,-3+\y)--(\x+6,-2+\y)--(\x+5,-2+\y)--(\x+5,-3+\y);
}
\foreach \x in {26,...,47}
\foreach \y in {6,12}
{
\draw [fill=green!30] (\x+5,-3+\y)--(\x+6,-3+\y)--(\x+6,-2+\y)--(\x+5,-2+\y)--(\x+5,-3+\y);
}
\foreach \x in {52,...,73}
\foreach \y in {6}
{
\draw [fill=green!30] (\x+5,-3+\y)--(\x+6,-3+\y)--(\x+6,-2+\y)--(\x+5,-2+\y)--(\x+5,-3+\y);
}

\foreach \x in {23,...,44}
\foreach \y in {8}
{
\draw [fill=green!30] (\x+5,-3+\y)--(\x+6,-3+\y)--(\x+6,-2+\y)--(\x+5,-2+\y)--(\x+5,-3+\y);
}
\foreach \x in {54,...,75}
\foreach \y in {8}
{
\draw [fill=green!30] (\x+5,-3+\y)--(\x+6,-3+\y)--(\x+6,-2+\y)--(\x+5,-2+\y)--(\x+5,-3+\y);
}

\foreach \x in {-2,...,19}
\foreach \y in {10}
{
\draw [fill=green!30] (\x+5,-3+\y)--(\x+6,-3+\y)--(\x+6,-2+\y)--(\x+5,-2+\y)--(\x+5,-3+\y);
}
\foreach \x in {53,...,74}
\foreach \y in {10}
{
\draw [fill=green!30] (\x+5,-3+\y)--(\x+6,-3+\y)--(\x+6,-2+\y)--(\x+5,-2+\y)--(\x+5,-3+\y);
}

\end{tikzpicture}
\end{center}
\caption{The tiling pattern.}\label{fig_pattern}
\end{figure}
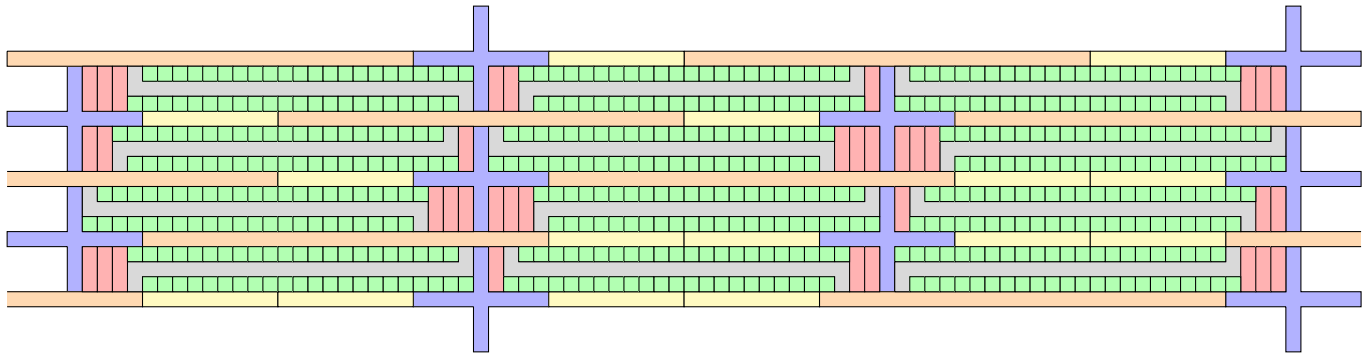


\begin{figure}[H]
\begin{center}
\begin{tikzpicture}[scale=0.3]

\foreach \x in {0}
\foreach \y in {0}
{
\draw [ fill=green!80] (\x+0,0+\y)--(\x+2,2+\y)--(\x+2,0+\y)--(\x+0,0+\y);
\draw [ fill=red!80] (\x+0,0+\y)--(\x+2,-2+\y)--(\x+2,0+\y)--(\x+0,0+\y);
\draw [ fill=red!80] (\x+4,0+\y)--(\x+2,2+\y)--(\x+2,0+\y)--(\x+4,0+\y);
\draw [ fill=yellow!80] (\x+4,0+\y)--(\x+2,-2+\y)--(\x+2,0+\y)--(\x+4,0+\y);
}
\foreach \x in {4}
\foreach \y in {-8}
{
\draw [ fill=green!80] (\x+0,0+\y)--(\x+2,2+\y)--(\x+2,0+\y)--(\x+0,0+\y);
\draw [ fill=red!80] (\x+0,0+\y)--(\x+2,-2+\y)--(\x+2,0+\y)--(\x+0,0+\y);
\draw [ fill=red!80] (\x+4,0+\y)--(\x+2,2+\y)--(\x+2,0+\y)--(\x+4,0+\y);
\draw [ fill=yellow!80] (\x+4,0+\y)--(\x+2,-2+\y)--(\x+2,0+\y)--(\x+4,0+\y);
}
\foreach \x in {6}
\foreach \y in {-6}
{
\draw [ fill=green!80] (\x+0,0+\y)--(\x+2,2+\y)--(\x+2,0+\y)--(\x+0,0+\y);
\draw [ fill=red!80] (\x+0,0+\y)--(\x+2,-2+\y)--(\x+2,0+\y)--(\x+0,0+\y);
\draw [ fill=red!80] (\x+4,0+\y)--(\x+2,2+\y)--(\x+2,0+\y)--(\x+4,0+\y);
\draw [ fill=yellow!80] (\x+4,0+\y)--(\x+2,-2+\y)--(\x+2,0+\y)--(\x+4,0+\y);
}

\foreach \x in {2}
\foreach \y in {-2}
{
\draw [ fill=yellow!80] (\x+0,0+\y)--(\x+2,2+\y)--(\x+2,0+\y)--(\x+0,0+\y);
\draw [ fill=blue!80] (\x+0,0+\y)--(\x+2,-2+\y)--(\x+2,0+\y)--(\x+0,0+\y);
\draw [ fill=blue!80] (\x+4,0+\y)--(\x+2,2+\y)--(\x+2,0+\y)--(\x+4,0+\y);
\draw [ fill=red!80] (\x+4,0+\y)--(\x+2,-2+\y)--(\x+2,0+\y)--(\x+4,0+\y);
}
\foreach \x in {4}
\foreach \y in {0}
{
\draw [ fill=yellow!80] (\x+0,0+\y)--(\x+2,2+\y)--(\x+2,0+\y)--(\x+0,0+\y);
\draw [ fill=blue!80] (\x+0,0+\y)--(\x+2,-2+\y)--(\x+2,0+\y)--(\x+0,0+\y);
\draw [ fill=blue!80] (\x+4,0+\y)--(\x+2,2+\y)--(\x+2,0+\y)--(\x+4,0+\y);
\draw [ fill=red!80] (\x+4,0+\y)--(\x+2,-2+\y)--(\x+2,0+\y)--(\x+4,0+\y);
}
\foreach \x in {0}
\foreach \y in {-4}
{
\draw [ fill=yellow!80] (\x+0,0+\y)--(\x+2,2+\y)--(\x+2,0+\y)--(\x+0,0+\y);
\draw [ fill=blue!80] (\x+0,0+\y)--(\x+2,-2+\y)--(\x+2,0+\y)--(\x+0,0+\y);
\draw [ fill=blue!80] (\x+4,0+\y)--(\x+2,2+\y)--(\x+2,0+\y)--(\x+4,0+\y);
\draw [ fill=red!80] (\x+4,0+\y)--(\x+2,-2+\y)--(\x+2,0+\y)--(\x+4,0+\y);
}

\foreach \x in {6}
\foreach \y in {-2}
{
\draw [ fill=red!80] (\x+0,0+\y)--(\x+2,2+\y)--(\x+2,0+\y)--(\x+0,0+\y);
\draw [ fill=yellow!80] (\x+0,0+\y)--(\x+2,-2+\y)--(\x+2,0+\y)--(\x+0,0+\y);
\draw [ fill=yellow!80] (\x+4,0+\y)--(\x+2,2+\y)--(\x+2,0+\y)--(\x+4,0+\y);
\draw [ fill=green!80] (\x+4,0+\y)--(\x+2,-2+\y)--(\x+2,0+\y)--(\x+4,0+\y);
}
\foreach \x in {4}
\foreach \y in {-4}
{
\draw [ fill=red!80] (\x+0,0+\y)--(\x+2,2+\y)--(\x+2,0+\y)--(\x+0,0+\y);
\draw [ fill=yellow!80] (\x+0,0+\y)--(\x+2,-2+\y)--(\x+2,0+\y)--(\x+0,0+\y);
\draw [ fill=yellow!80] (\x+4,0+\y)--(\x+2,2+\y)--(\x+2,0+\y)--(\x+4,0+\y);
\draw [ fill=green!80] (\x+4,0+\y)--(\x+2,-2+\y)--(\x+2,0+\y)--(\x+4,0+\y);
}
\foreach \x in {2}
\foreach \y in {-6}
{
\draw [ fill=red!80] (\x+0,0+\y)--(\x+2,2+\y)--(\x+2,0+\y)--(\x+0,0+\y);
\draw [ fill=yellow!80] (\x+0,0+\y)--(\x+2,-2+\y)--(\x+2,0+\y)--(\x+0,0+\y);
\draw [ fill=yellow!80] (\x+4,0+\y)--(\x+2,2+\y)--(\x+2,0+\y)--(\x+4,0+\y);
\draw [ fill=green!80] (\x+4,0+\y)--(\x+2,-2+\y)--(\x+2,0+\y)--(\x+4,0+\y);
}
\foreach \x in {0}
\foreach \y in {-8}
{
\draw [ fill=red!80] (\x+0,0+\y)--(\x+2,2+\y)--(\x+2,0+\y)--(\x+0,0+\y);
\draw [ fill=yellow!80] (\x+0,0+\y)--(\x+2,-2+\y)--(\x+2,0+\y)--(\x+0,0+\y);
\draw [ fill=yellow!80] (\x+4,0+\y)--(\x+2,2+\y)--(\x+2,0+\y)--(\x+4,0+\y);
\draw [ fill=green!80] (\x+4,0+\y)--(\x+2,-2+\y)--(\x+2,0+\y)--(\x+4,0+\y);
}

\end{tikzpicture}
\end{center}
\caption{The equivalent region of Wang tiles.}\label{fig_wang_region}
\end{figure}
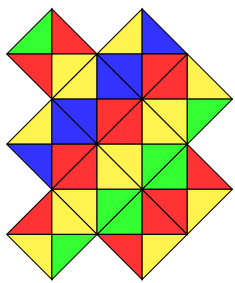

To prove the pattern in Figure \ref{fig_pattern} must be enforced, we establish the following facts step by step.

\begin{Fact}[Start with a selector]\label{fct_selector}
To tile the plane with the set of $29$ Wang bars we have constructed, the selectors must be used.  
\end{Fact}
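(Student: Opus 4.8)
The plan is to argue by contradiction. Suppose a tiling of the whole plane uses none of the $7$ bars of the selector; I will show that then the remaining $22$ bars, family by family, become unusable, until there is nothing left to place. The only tool is the elementary remark that in a tiling of the plane every placed bar has a neighbour on each of its four sides, so that if a bar $B$ carries a color $c$ on one side while no bar of the set carries $c$ on the opposite side, then $B$ cannot appear in the tiling at all. I will apply this remark in rounds: each round rules out more bars, and thereby shrinks the set of colors that are still available on each kind of side.

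The cascade I expect is the following. The crucial observation is that the color $a_7$ occurs on a top side, and $a_8$ on a bottom side, \emph{only} on the long bar of the selector. Hence, once the selector is forbidden, every bar with $a_8$ on its top side (the group-$1$ filler with top color $a_8$, and the top bar of each $a_8/a_5$ rectangle of group $2$) is unusable, and symmetrically every bar with $a_7$ on its bottom side is unusable; since the internal colors $b_5,b_6$ (resp.\ $b_7,b_8$) chain the three bars of each $a_8/a_5$ (resp.\ $a_6/a_7$) rectangle together, the whole group $2$ of fillers disappears. With the selector and group $2$ gone, no surviving bar has the color $x$ on a left side, so the two long linkers, which carry $x$ on their right side, are unusable. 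Then the colors $b_1,b_2,b_3,b_4$ no longer appear on the sides where the remaining bars would need a match, which removes the two short linkers and all $8$ bars of group $1$. At this stage the $22$ bars have been whittled down ($22\to 12\to 10\to 2$) to exactly the encoder and the aligner.

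To finish, I will check that the encoder and the aligner admit no vertical adjacency: the colors on a top side of either of them all lie in $\{a_1,a_3,a_5\}$, while those on a bottom side all lie in $\{a_2,a_4,a_6\}$, and these two sets are disjoint, so no bar of $\{$encoder, aligner$\}$ can sit directly above another — contradicting the fact that a tiling of the plane contains vertically adjacent bars. I do not expect a genuine obstacle here; the argument is essentially bookkeeping, and the only points that need care are (i) confirming, side by side, exactly which bars carry $a_7,a_8,x,b_1,b_2,b_3,b_4$, and (ii) verifying that the chaining through $b_5,\dots,b_8$ really does force all three bars of each group-$2$ rectangle, so that excluding one of them excludes the whole rectangle. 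The argument is uniform in the parameters $n$ and $m$, since none of the colors or incidences used above depends on them.
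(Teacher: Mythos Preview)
Your argument is correct and is the contrapositive of the paper's own proof: the paper runs a forward case analysis (``if group~2 is used then selectors are used via $a_7,a_8$; if linkers are used then selectors or group~2 are used via $x,y$; \dots''), while you eliminate the families in the reverse order once the selector is assumed absent, using exactly the same color-matching observations. The only minor slip is bookkeeping --- when you write ``all $8$ bars of group~$1$'' in the third round, two of them were already discarded in the first round via $a_7,a_8$, so only $6$ remain (your count $10\to 2$ is nonetheless correct); also note that your final step, showing explicitly that the encoder and aligner alone admit no vertical adjacency since their top colors $\{a_1,a_3,a_5\}$ and bottom colors $\{a_2,a_4,a_6\}$ are disjoint, is a cleaner closing than the paper's, which leaves that endpoint implicit.
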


\begin{proof}[Proof of Fact \ref{fct_selector}]
We prove this by cases.
\begin{itemize} 
    \item 
If the fillers of group $2$ are used, then the selectors must be used because the colors $a_8$ or $a_7$ in fillers of group $2$ can only be matched by the selectors. 
    \item 
If the linkers are used, then the selectors must be used. Because the colors $x$ or $y$ on the vertical sides of the linkers can only be matched by either the selectors directly or the fillers of group $2$.
    \item 
If the fillers of group $1$ are used, then the selectors must be used. Because the colors $b_3$ or $b_4$ in fillers of group $1$ can only be matched by the linkers.
    \item 
If the encoders are used, then the selectors must be used. Because the colors $a_3$ or $a_4$ in encoders can only be matched by the linkers or the fillers of group $1$.
    \item 
If the aligners are used, then the selectors must be used. Because the color $a_5$ or $a_6$ in aligners can only be matched by the fillers of group $1$ or $2$.
\end{itemize}
In all cases, the selectors must be used. \end{proof}

By Fact \ref{fct_selector}, we can start tiling by placing a selector on the plane first. The only other Wang bar that can be adjacent to the side colored $a_2$ of the top pointer of the selector is a locator of a section of an encoder. The same applies to the bottom pointer of the selector. So we have proved the following fact.

\begin{Fact}[Add encoders next to a selector] \label{fct_add_encoders}
    To tile the plane from any partial tiling, we must place an encoder next to the top pointer and bottom pointer of any selector.  
\end{Fact}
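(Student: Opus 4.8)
The plan is to derive Fact~\ref{fct_add_encoders} from Fact~\ref{fct_selector} together with a bounded inspection of the colours that appear on horizontal edges of the $29$ Wang bars. By Fact~\ref{fct_selector} every tiling of the plane contains a selector, so I would fix an arbitrary selector occurring in the tiling and reason about its top pointer; the bottom pointer is handled by the mirror-image argument. Since the tiling covers the whole plane, some Wang bar lies immediately above the top pointer, and the unit segment of its bottom side that meets the top pointer must carry the same colour as the top side of the top pointer, namely $a_2$. Thus the problem reduces to finding every unit segment, over all $29$ Wang bars, that is coloured $a_2$ \emph{when it serves as a bottom edge}.

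I would then go through the six groups (Figures~\ref{fig_encoder}--\ref{fig_filler_2}) and verify that this occurs in exactly one place, the bottom side of a locator of the encoder. Briefly: the encoder's arms carry only $a_4$ or $a_6$ below, and $z$ at their two ends; inside the selector $a_2$ appears only as the \emph{top} edge of the top pointer; the aligner shows $a_6$ all along its bottom; the linkers use only $b_1,b_2,b_4$ and $a_3$ on bottom edges; among the fillers of group~$1$ the four bars whose top colour lies in $\{a_2,a_4,a_6,a_8\}$ have bottom colour $b_3$, so $a_2$ is never a bottom edge there; and the fillers of group~$2$ carry $a_5$ or $a_7$ along their bottoms. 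Hence the bar above the top pointer can only be an encoder, placed so that one of its locators sits directly over the top pointer --- which is exactly the claim that an encoder must be placed next to the top pointer.

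For the bottom pointer I would repeat the argument with top and bottom interchanged: the bottom edge of the bottom pointer is $a_1$, so whatever lies directly below must present $a_1$ on a \emph{top} edge, and the same pass over the six groups shows the only top edge coloured $a_1$ is the top side of a locator (in the fillers of group~$1$ the four bars whose bottom edge lies in $\{a_1,a_3,a_5,a_7\}$ carry top colour $b_4$, not $a_1$, and $a_1$ occurs on no other top edge). So an encoder is likewise forced below the bottom pointer. Since the selector was arbitrary this proves the fact, and the same count is valid in the general construction for arbitrary $n$ and $m$, because enlarging these parameters only lengthens arm sections, and arm segments never display $a_1$ or $a_2$ on the side facing a pointer.

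I do not anticipate a genuine obstacle: the entire argument is a finite check against the colour data of Figures~\ref{fig_encoder}--\ref{fig_filler_2} rather than a conceptual step. The only points that call for care are the completeness of the enumeration and, above all, keeping separate the roles of a colour as a \emph{top} edge versus as a \emph{bottom} edge --- it is precisely this distinction that stops the fillers of group~$1$, which do use $a_1$ and $a_2$ but on the wrong sides, from being placed next to a pointer in lieu of an encoder.
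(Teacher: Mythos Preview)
Your proposal is correct and follows exactly the approach the paper uses: the paper's proof of Fact~\ref{fct_add_encoders} is the two sentences immediately preceding its statement, observing that the only bar with $a_2$ on a bottom edge (respectively $a_1$ on a top edge) is a locator of an encoder. Your version simply spells out the colour enumeration over the six groups that the paper leaves implicit.
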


So we place one encoder right above a selector, and place another encoder right under it (see Figure \ref{fig_partial_1}). Note that only one section of each encoder is depicted in Figure \ref{fig_partial_1}, this means the selector picks one section out of the $m$ sections of each encoder.

\begin{figure}[H]
\begin{center}
\begin{tikzpicture}[scale=0.4]

\foreach \x in {0}
{
\draw [fill=blue!10] (\x+0,0)--(\x+0,1)--(\x+8,1)--(\x+8,4)--(\x+9,4)--(\x+9,1)--(\x+17,1)--(\x+17,0)--(\x+9,0)--(\x+9,-3)--(\x+8,-3)--(\x+8,0)--(\x+0,0);
\draw [fill=orange!10] (\x+17,5)--(\x+0,5)--(\x+0,4)--(\x+17,4)--(\x+17,5); \draw [fill=orange!60](\x+8,4)--(\x+8,5)--(\x+9,5)--(\x+9,4)--(\x+8,4);

}

\foreach \x in {0}
{
\draw [fill=orange!10] (\x+17,-3)--(\x+0,-3)--(\x+0,-4)--(\x+17,-4)--(\x+17,-3); \draw [fill=orange!60](\x+8,-4)--(\x+8,-3)--(\x+9,-3)--(\x+9,-4)--(\x+8,-4);
}

\draw [fill=red!60] (2,4)--(3,4)--(3,4.5)--(2,4.5)--(2,4); \draw [fill=red!60] (12,4)--(13,4)--(13,4.5)--(12,4.5)--(12,4);
\draw [fill=red!60] (4,-3)--(5,-3)--(5,-3.5)--(4,-3.5)--(4,-3); \draw [fill=red!60] (14,-3)--(15,-3)--(15,-3.5)--(14,-3.5)--(14,-3);

\end{tikzpicture}
\end{center}
\caption{A selector and two encoders.}\label{fig_partial_1}
\end{figure}
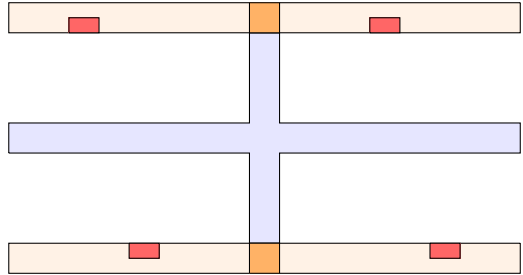

We highlight $4$ encoding colors by red rectangles in Figure \ref{fig_partial_1}. There is exactly one encoding color on one side of an arm, and we can continue tiling the plane from those encoding colors by the following fact.

\begin{Fact}[Add a linker next to an encoding color]\label{fct_partial_1}
    To tile the plane from the partial tiling illustrated in Figure \ref{fig_partial_1}, we must place a linker right next to each of the $4$ highlighted (red rectangles) encoding colors. More precisely, we must place a linker directly under the unit segment with encoding color $a_4$, or directly above encoding color $a_3$. 
\end{Fact}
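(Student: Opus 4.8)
The plan is to read off, from the color-matching constraint alone, which of the $29$ Wang bars is allowed to occupy the cell lying immediately below a unit segment colored $a_4$, and, by the obvious top--bottom reflection, immediately above one colored $a_3$. Scanning the edge colors of every Wang bar, the color $a_4$ appears on a \emph{top} side only for the short linker (top $a_4$, bottom $b_1$) and for one filler of group $1$ (top $a_4$, bottom $b_3$); all other occurrences of $a_4$ are on bottom sides. So the cell directly under an $a_4$ segment holds one of these two bars, and I only have to rule out the group-$1$ filler. The $a_3$ case is the mirror image: the only bars with $a_3$ on a \emph{bottom} side are the short linker (bottom $a_3$, top $b_2$) and one filler of group $1$ (bottom $a_3$, top $b_4$).

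The key step is to exclude the group-$1$ filler. Suppose the group-$1$ filler with top $a_4$ and bottom $b_3$ is placed right under the $a_4$ segment. Its exposed lower edge shows $b_3$, and among all $29$ bars only the two long linkers carry $b_3$ on a top side; hence the next cell down must be part of a long linker. Now I invoke the geometry already forced by Fact \ref{fct_add_encoders}: once a selector and its two flanking encoders are in place, the vertical band between the selected section's arm of the top encoder and the top side of the selector's long Wang bar is exactly three cells high, it is interrupted at the locator column (which is filled by bars of the selector), and along the two arms it is bounded below by color $a_7$ (and, in the mirrored $a_3$ situation, bounded above by $a_8$). A long linker has width $(n-1)(2m+1)+m+1$, far larger than the width $m$ of an arm; since it cannot cross the locator column, it must spill out of this three-cell-high band, and tracing its two distinguished ends --- the unique $b_1$ at one end of its top string forces a short linker with top color $a_4$ (hence an encoder's $a_4$) directly over it, while the unique $b_2$ at one end of its bottom string forces a short linker with bottom color $a_3$ directly under it --- produces cells whose forced colors $a_4$ and $a_3$ cannot be matched against the $a_7$ and $a_8$ that the selector's long bar and the group-$2$ filler rectangles expose in the neighboring columns of the band. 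This contradiction rules the group-$1$ filler out, so a (short) linker is the only bar that can go directly under $a_4$; the mirrored argument does the $a_3$ case.

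The part I expect to be the real work is this exclusion of the group-$1$ filler: one must enumerate, with some care, every Wang bar that could conceivably appear in the second and third rows of the three-cell band and in the two columns flanking the encoding-color column, and then show that the long linker forced by the group-$1$ filler always runs into an unavoidable clash --- either a color mismatch with the backbone of Figure \ref{fig_partial_1} or a geometric overlap with bars already placed. The delicate point is that the long linker is wider than a whole section, so its protrusion beyond the band has to be followed as well; carrying along the vertical colors $x$ and $y$ (borne by the long linkers and by the middle cells of the group-$2$ filler rectangles) is what makes every branch of the case analysis terminate in a contradiction and leaves the short linker as the unique possibility.
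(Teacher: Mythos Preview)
Your proposal is correct and takes essentially the same approach as the paper: both observe that only the short linker and one group-$1$ filler carry $a_4$ on a top side, and then eliminate the filler by noting that its exposed $b_3$ forces a long linker immediately below, whose attached short end-pieces create a color clash with the partial tiling already fixed by Fact~\ref{fct_add_encoders}. The paper's execution is more local and does not bring in the vertical colors $x,y$ or the group-$2$ fillers at this stage: it simply exhibits (Figure~\ref{fig_1arm}) the two long-linker orientations and reads off the contradiction directly --- in one orientation the short piece's bottom $a_3$ lands on the selector arm's top $a_7$, in the other its top $a_4$ lands on a position of the encoder's left arm that carries $a_6$ (there being only one $a_4$ per arm, already covered by the filler). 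Your anticipated broader case analysis with $x,y$ and spill-over beyond the band is not needed.
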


\begin{figure}[H]
\begin{center}
\begin{tikzpicture}[scale=0.4]

\foreach \x in {0}
{
\draw [fill=blue!10] (\x+0,0)--(\x+0,1)--(\x+8,1)--(\x+8,4)--(\x+9,4)--(\x+9,1)--(\x+17,1)--(\x+17,0)--(\x+9,0)--(\x+9,-3)--(\x+8,-3)--(\x+8,0)--(\x+0,0);
\draw [fill=orange!10] (\x+17,5)--(\x+0,5)--(\x+0,4)--(\x+17,4)--(\x+17,5); \draw [fill=orange!60](\x+8,4)--(\x+8,5)--(\x+9,5)--(\x+9,4)--(\x+8,4);
\draw [fill=green!10] (\x+2,4)--(\x+3,4)--(\x+3,3)--(\x+2,3)--(\x+2,4);
\draw [fill=gray!10] (\x+5,1)--(\x+5,2)--(\x-1,2)--(\x-0.7,2.7)--(\x-1,3)--(\x+6,3)--(\x+6,1)--(\x+5,1);
}

\foreach \x in {20}
{
\draw [fill=blue!10] (\x+0,0)--(\x+0,1)--(\x+8,1)--(\x+8,4)--(\x+9,4)--(\x+9,1)--(\x+17,1)--(\x+17,0)--(\x+9,0)--(\x+9,-3)--(\x+8,-3)--(\x+8,0)--(\x+0,0);
\draw [fill=orange!10] (\x+17,5)--(\x+0,5)--(\x+0,4)--(\x+17,4)--(\x+17,5); \draw [fill=orange!60](\x+8,4)--(\x+8,5)--(\x+9,5)--(\x+9,4)--(\x+8,4);
\draw [fill=green!10] (\x+2,4)--(\x+3,4)--(\x+3,3)--(\x+2,3)--(\x+2,4);
\draw [fill=gray!10] (\x+6,4)--(\x+6,3)--(\x-1,3)--(\x-0.7,2.7)--(\x-1,2)--(\x+7,2)--(\x+7,4)--(\x+6,4);
}

\end{tikzpicture}
\end{center}
\caption{Putting a filler under the encoding color leads to a dead end.}\label{fig_1arm}
\end{figure}
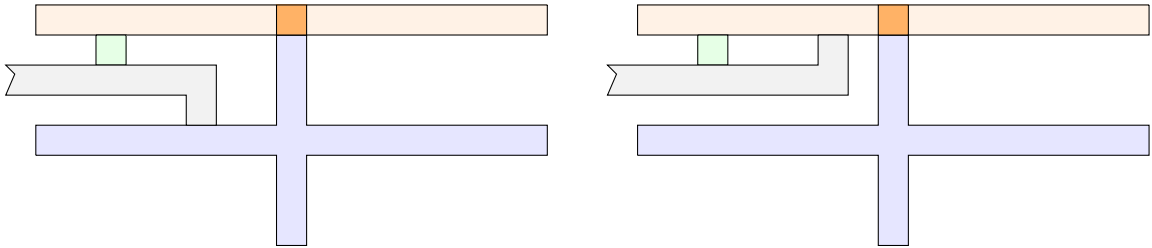

\begin{proof}[Proof of Fact \ref{fct_partial_1}] We prove by contradiction. If we do not place a linker right under the unit line segment with encoding color $a_4$, then the only other choice is to place a filler of group $1$ (see Figure \ref{fig_1arm}), which is colored $a_4$ on the top side and $b_3$ at the bottom side. Now the only group of Wang bars that can be placed right under the filler is a linker, which will create a contradiction with the color on the left arm of the selector (see the left of Figure \ref{fig_1arm}) because there is no encoding colors on top side of the left arm of the selector, or create a contradiction with the left arm of the picked-out section of encoder (see the right of Figure \ref{fig_1arm}) because there is not another encoding color $a_4$ there. This completes the proof of the fact.
\end{proof}

By Fact \ref{fct_partial_1}, we put a linker right under the unit segment colored $a_4$ of a section (of an encoder) which is picked out by the selector. On the other side of this linker, we must place another encoder because only the encoders have the color $a_3$ on the top sides (see Figure \ref{fig_partial_2}).

\begin{figure}[H]
\begin{center}
\begin{tikzpicture}[scale=0.4]

\draw [fill=blue!10] (0,0)--(0,1)--(8,1)--(8,4)--(9,4)--(9,1)--(17,1)--(17,0)--(9,0)--(9,-3)--(8,-3)--(8,0)--(0,0);

\draw [fill=orange!10] (17,5)--(0,5)--(0,4)--(17,4)--(17,5); \draw [fill=orange!60](8,4)--(8,5)--(9,5)--(9,4)--(8,4);
\draw [fill=orange!10] (-24,0)--(-7,0)--(-7,1)--(-24,1)--(-24,0); \draw [fill=orange!60](-15,0)--(-15,1)--(-16,1)--(-16,0)--(-15,0);

\draw [fill=gray!10] (3,4)--(3,3)--(-5,3)--(-4.7,2.7)--(-5,2)--(4,2)--(4,4)--(3,4);
\draw [fill=gray!10] (-12,1)--(-12,3)--(-5.3,3)--(-5,2.7)--(-5.3,2)--(-11,2)--(-11,1)--(-12,1);

\end{tikzpicture}
\end{center}
\caption{Connecting to the second encoder.}\label{fig_partial_2}
\end{figure}
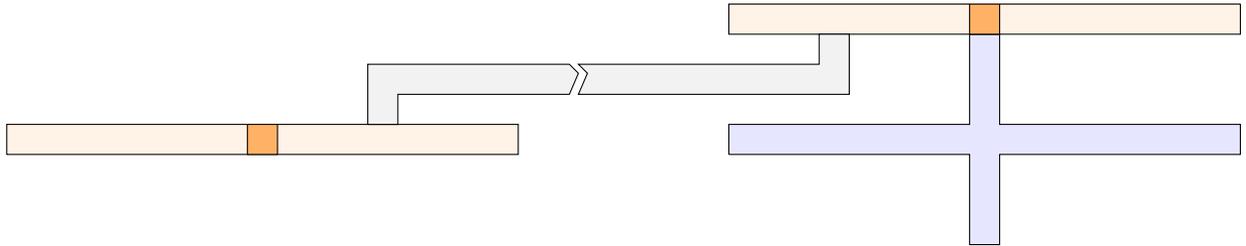

 We observe the following fact about how a linker connects two encoders.

\begin{Fact}[Add another encoder on the other side of a linker, weak version] \label{fct_align}
    To tile the plane from the partial tiling as illustrated in Figure \ref{fig_partial_2}, the linker must connect the left arm of a section of one encoder to the right arm of a section of another encoder. In other words, it can connect neither a left arm to a left arm, nor a right arm to a right arm.
\end{Fact}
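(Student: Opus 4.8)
The plan is to combine the rigid geometry of a linker with a short modular-arithmetic argument keyed to the length $W=(n-1)(2m+1)+m+1$ of its long Wang bar. Write $I_\ell=\{0,1,\dots,m-1\}$ and $I_r=\{m+1,\dots,2m\}$ for the left- and right-arm columns of a section, so column $m$ is the locator. First I would pin down the shape of a single linker: the colors $b_1,b_2$ occur only on the four bars of a linker and force the short bar with top side $a_4$ (call it $A$) to lie one row above the $b_1$-end of the long bar, and the short bar with bottom side $a_3$ (call it $B$) to lie one row below the $b_2$-end; since $b_1,b_2$ label opposite ends of the long bar, a linker is a staircase three rows tall whose $a_4$-segment (top of $A$) and $a_3$-segment (bottom of $B$) are its two horizontal extremes, exactly $W-1=(n-1)(2m+1)+m$ columns apart. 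By Fact~\ref{fct_partial_1}, $A$ sits immediately below an $a_4$-colored unit segment on the bottom of an arm of one encoder (the \emph{upper} encoder), and since $a_3$ occurs on a top side only along encoder arms, $B$ sits immediately above an $a_3$-colored unit segment on the top of an arm of another encoder (the \emph{lower} encoder). Letting $c_A,c_B$ be the columns, within their own sections, of this $a_4$ and this $a_3$, it suffices to prove that $c_A,c_B$ cannot both lie in $I_\ell$ and cannot both lie in $I_r$.

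The heart of the argument is that the two encoders have the same phase modulo $2m+1$. Counting rows in Figure~\ref{fig_partial_2}: the selector's top pointer lies one row below the upper encoder, the linker's long bar two rows below, its short bar $B$ three rows below, and the lower encoder four rows below; while the selector's top pointer lies three rows above the selector's horizontal bar. Hence the lower encoder and the selector's horizontal bar occupy the same row. Because $z$ occurs only on the left/right sides of encoders, selectors and aligners, each such backbone bar abuts another along a $z$-edge, so in any fixed row the backbone bars form a bi-infinite chain of blocks of length $2m+1$ (a section of an encoder, an aligner, or a selector's horizontal bar), all of whose block boundaries lie in one residue class modulo $2m+1$ — the phase of that row. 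Thus the lower encoder and the selector's horizontal bar share a phase; and since the selector's top pointer sits $m$ cells from the left end of its horizontal bar and is glued beneath the locator of the selected section of the upper encoder (itself $m$ cells from the left end of that section), the upper encoder has that same phase too. So the left edges $x_1,x_2$ of the two encoders satisfy $x_1\equiv x_2\pmod{2m+1}$.

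Finally, since section lengths and all encoder offsets are multiples of $2m+1$ and the phases agree, the horizontal separation $W-1\equiv m\pmod{2m+1}$ forces $c_B\equiv c_A\pm m\pmod{2m+1}$. If $c_A\in I_\ell$ then $c_A\pm m$ reduces mod $2m+1$ into $\{m,\dots,2m-1\}$ or $\{m+1,\dots,2m\}$, both disjoint from $I_\ell$; if $c_A\in I_r$ then $c_A\pm m$ reduces into $\{0,\dots,m-1\}$ or $\{1,\dots,m\}$, both disjoint from $I_r$. As $c_A,c_B$ are arm columns, they lie in opposite arm ranges, which is the claim. The step I expect to fight hardest is the middle one — making the row-count precise and, in particular, justifying that the backbone bars of a row form a single bi-infinite $(2m+1)$-aligned chain, so that the lower encoder and the selector's horizontal bar cannot belong to distinct chains of incompatible phase; Steps 1 and 3 are routine bookkeeping.
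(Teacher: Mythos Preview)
Your argument is correct and is essentially the paper's proof recast in modular-arithmetic language: the paper directly computes the horizontal distance $l$ from the left edge of the lower encoder's section to the left edge of the selector's long bar, obtains $l=(n-1)(2m+1)+(m+q-p)$ with $1\le m+q-p\le 2m-1$, and notes that $l$ must nonetheless be a multiple of $2m+1$ since only $z$-edged bars (sections or aligners) can fill that row between them. Your self-identified hard step is over-worried --- you do not need a global bi-infinite $(2m+1)$-aligned chain, only that the finite gap between the lower encoder and the selector's horizontal bar in that shared row is bridged by $z$-edged bars of length divisible by $2m+1$, which is immediate from the fact that $z$ appears only on the vertical sides of encoders, selectors, and aligners.
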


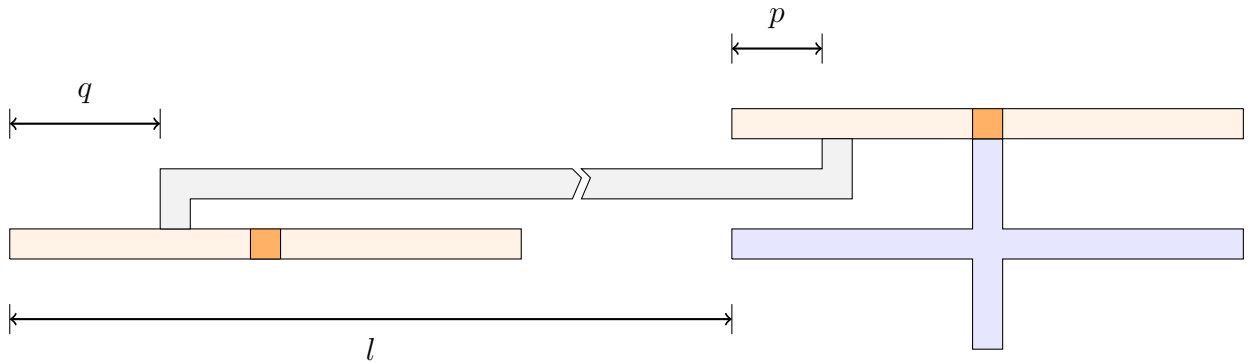
\begin{figure}[H]
\begin{center}
\begin{tikzpicture}[scale=0.4]

\draw [fill=blue!10] (0,0)--(0,1)--(8,1)--(8,4)--(9,4)--(9,1)--(17,1)--(17,0)--(9,0)--(9,-3)--(8,-3)--(8,0)--(0,0);

\draw [fill=orange!10] (17,5)--(0,5)--(0,4)--(17,4)--(17,5); \draw [fill=orange!60](8,4)--(8,5)--(9,5)--(9,4)--(8,4);

\draw [fill=orange!10] (-24,0)--(-7,0)--(-7,1)--(-24,1)--(-24,0); \draw [fill=orange!60](-15,0)--(-15,1)--(-16,1)--(-16,0)--(-15,0);

\draw [fill=gray!10] (3,4)--(3,3)--(-5,3)--(-4.7,2.7)--(-5,2)--(4,2)--(4,4)--(3,4);
\draw [fill=gray!10] (-19,1)--(-19,3)--(-5.3,3)--(-5,2.7)--(-5.3,2)--(-18,2)--(-18,1)--(-19,1);

\draw [<->,thick] (-24,-2)--(0,-2);  
\node at (-12,-3) {$l$};
\draw (-24,-1.5)--(-24,-2.5); 
\draw (0,-1.5)--(0,-2.5);

\draw [<->,thick] (0,7)--(3,7);  
\node at (1.5,8) {$p$};
\draw (3,6.5)--(3,7.5); 
\draw (0,6.5)--(0,7.5);

\draw [<->,thick] (-19,4.5)--(-24,4.5);  
\node at (-21.5,5.5) {$q$};
\draw (-19,4)--(-19,5); 
\draw (-24,4)--(-24,5);

\end{tikzpicture}
\end{center}
\caption{Connecting two left arms of two encoders by a linker leads to a dead end.}\label{fig_2arm}
\end{figure}

\begin{proof}[Proof of Fact \ref{fct_align}]
    We prove the fact by contradiction for the general case where our set of Wang bars is simulating a set of $n$ Wang tiles with $m$ different colors. Suppose the linker is connecting two left arms as illustrated in Figure \ref{fig_2arm}. Note that only one section of each of the two encoders is depicted (and the locators for the sections are in darker orange). Suppose the linker is connecting the $(p+1)$-th unit square of the upper section to the $(q+1)$-th unit square of the lower section. Both $p$ and $q$ satisfy $0\leq p,q\leq m-1$. Hence
    $$-(m-1)\leq q-p \leq m-1.$$
    Then the distance between the left side of the lower section and the left side of the selector is 
    \begin{align*} 
l &=  \Big ( (n-1)(2m+1)+m+1 \Big ) -1-p+q \\ 
  &=  (n-1)(2m+1) +(m+q-p).
\end{align*}
Therefore, 
$$(n-1)(2m+1) +1 \leq  l \leq (n-1)(2m+1)+2m-1,$$
which implies $l$ is not a multiple of $2m+1$. On the other hand, the only Wang bars that can lie between the lower section (of an encoder) and the selector in Figure \ref{fig_2arm} are another section or an aligner, both of which are of length $2m+1$. So $l$ is a multiple of $2m+1$, which leads to a contradiction.
\end{proof}

By applying the same argument as in the proof of Fact \ref{fct_align}, we can obtain the following stronger fact.

\begin{Fact}[Add another encoder on the other side of a linker, strong version] \label{fct_align_2}
    To tile the plane from the partial tiling as illustrated in Figure \ref{fig_partial_2}, the linker must connect the left arm of a section of one encoder to the right arm of a section of another encoder. Furthermore, if the linker is connecting the $p$-th unit square of the left arm to the $q$-th unit square of the right arm, then $p=q$.
\end{Fact}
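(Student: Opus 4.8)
The plan is to re-run, essentially verbatim, the counting argument from the proof of Fact \ref{fct_align}, but now inside the one configuration that Fact \ref{fct_align} has left standing: a linker joining the left arm of one section to the right arm of another. So I would keep the coordinate setup of Figure \ref{fig_2arm} --- the selector with its left side at the origin, its selected upper section sitting on it and occupying the columns $1,\dots,2m+1$ (left arm in columns $1,\dots,m$, locator in column $m+1$, right arm in columns $m+2,\dots,2m+1$) --- and I would attach the linker beneath the unit square carrying the encoding color $a_4$. By Fact \ref{fct_align} that square lies, say, on the left arm, at its $p$-th position with $1\le p\le m$; the long Wang bar of the linker (of length $(n-1)(2m+1)+m+1$) then runs off to the left and, again by Fact \ref{fct_align}, its $a_3$-end must land in the \emph{right} arm of the lower section, at its $q$-th position with $1\le q\le m$. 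The opposite case, where the linker leaves the right arm of the upper section and runs to the left arm of a lower section, is the mirror image and needs no separate treatment.

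The crux is then to recompute the horizontal distance $l$ between the left side of the lower section and the left side of the selector. The only change from Fact \ref{fct_align} is that the $a_3$-end of the linker now meets the right arm rather than the left arm of the lower section, i.e.\ $m+1$ columns further to the right \emph{within that section}; since the linker fixes the absolute column of its $a_3$-end, the lower section must be shifted $m+1$ columns further to the left than in Fact \ref{fct_align}. Hence
\[
l \;=\; \Big((n-1)(2m+1)+(m+q-p)\Big)+(m+1) \;=\; n(2m+1)+(q-p).
\]
As in Fact \ref{fct_align}, everything lying between the lower section and the selector is a sequence of further sections and aligners, each of length $2m+1$, so $l$ is a multiple of $2m+1$. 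But $1\le p,q\le m$ forces $|q-p|\le m-1<2m+1$, and the only multiple of $2m+1$ in the interval $[\,n(2m+1)-(m-1),\,n(2m+1)+(m-1)\,]$ is $n(2m+1)$ itself; therefore $q-p=0$, i.e.\ $p=q$.

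I do not expect a real obstacle here: once Fact \ref{fct_align} has pinned down the valid configuration, the statement is just a one-line offset computation. The only thing that genuinely needs care is bookkeeping --- getting the orientation of the asymmetric long linker Wang bar right in each of the two mirror cases, and checking that in the valid (left-arm-to-right-arm) configuration $l$ really does come out as an exact multiple of $2m+1$, rather than falling strictly between two consecutive multiples as it did in the forbidden left-arm-to-left-arm configuration of Fact \ref{fct_align}. That contrast is precisely what makes the strong version true where the weak version only yielded a contradiction.
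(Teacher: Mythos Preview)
Your proposal is correct and is exactly the approach the paper intends: the paper's entire proof of Fact~\ref{fct_align_2} is the single sentence ``By applying the same argument as in the proof of Fact~\ref{fct_align}'', and you have spelled out precisely that argument, arriving at $l = n(2m+1) + (q-p)$ and concluding $p=q$ from the divisibility constraint. Your offset computation (the extra $m+1$ shift coming from landing in the right arm rather than the left arm) is right, and your handling of the mirror case is adequate.
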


We continue tiling the plane from the partial tiling in Figure \ref{fig_partial_2}. Note that in Figure \ref{fig_partial_2}, the linker connects the fourth unit square of a left arm and the fourth unit square of a right arm, which agrees with Fact \ref{fct_align} and Fact \ref{fct_align_2}. We continue by placing the next group of Wang bars on top of the locator of the second section, and we have the following fact.

\begin{Fact} [Add another selector] \label{fct_add_2nd_selector}
    To tile the plane from the partial tiling as illustrated in Figure \ref{fig_partial_2}, the next group of Wang bars to be placed on top of the locator marked by dark orange (of the second encoder) must be a selector.
\end{Fact}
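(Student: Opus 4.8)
I would argue by contradiction, following the pattern of the proofs of Facts~\ref{fct_partial_1}--\ref{fct_align_2}. The top side of the locator marked in dark orange in Figure~\ref{fig_partial_2} carries the color $a_1$, and among the $29$ Wang bars the color $a_1$ appears on the bottom side of exactly two: the bottom pointer of a selector, and the unique filler of group~$1$ whose bottom is $a_1$ and top is $b_4$. So the claim reduces to showing that this filler cannot be placed on top of the locator: once the bottom pointer of a selector is forced there, the internal colors of the selector's seven bars, which by construction occur nowhere else, propagate and force the whole selector to appear.

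Suppose then that the $(a_1,b_4)$-filler $F$ is placed on top of the locator of $E_2$. Its top color $b_4$ occurs on the bottom side only of the two long linker bars, so directly above $F$ we must place a long linker $L'$. I would then pin down $L'$ relative to the already-placed pieces, just as in the proof of Fact~\ref{fct_partial_1}: the long bar of $L'$ carries $b_3$ on all but one of its top unit segments and $b_4$ on all but one of its bottom unit segments, so below $L'$ every column but one must carry an $(a_i,b_4)$-filler of group~$1$ and the exceptional column the short linker bar with bottom $a_3$; hence, below those, the backbone must present at the $(n-1)(2m+1)+m+1$ columns spanned by $L'$ a top color from $\{a_1,a_3,a_5,a_7\}$, and $a_3$ at the exceptional column. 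Since $F$ sits above $E_2$'s locator, the backbone directly below $L'$ there is $E_2$ itself, whose top carries $a_1$ only at locators and $a_3$ or $a_5$ on the arms. Using Fact~\ref{fct_align_2} to fix the position of the linker $L$ already joining $E_1$ to $E_2$ (it lies in the same horizontal strip that $L'$ would occupy, and its attachment columns are determined), I would compute the column span of $L'$ and show it cannot simultaneously avoid overlapping $L$, have its down-facing $(b_2,a_3)$ end rest on a backbone column that carries $a_3$, and have its up-facing $(a_4,b_1)$ end rest against $E_1$'s bottom on a column carrying $a_4$ (or, if that end falls outside the current extent of $E_1$, against a consistently placed encoder arm) --- and that every residual case, on continuing the forced tiling upward, runs into the ``length not a multiple of $2m+1$'' contradiction of Fact~\ref{fct_align}: a long linker has width $(n-1)(2m+1)+m+1\equiv m+1\pmod{2m+1}$, so the horizontal displacement between the two encoding columns it joins is $\equiv m\not\equiv 0\pmod{2m+1}$, while the backbone it must cross is covered by blocks (sections, aligners, selector long bars) of width $2m+1$.

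The step I expect to be the real work is this case analysis: a long linker occurs in two mirror orientations and the forced filler $F$ may sit above any of its $b_4$-columns, which yields several sub-configurations of $L'$. The way I would keep it in check is to reduce every case to the single arithmetic invariant above --- the backbone is covered by width-$(2m+1)$ blocks while a linker shifts the encoding grid by $m$ columns modulo $2m+1$ --- so that any consistent placement of $L'$ forces a run of backbone blocks whose widths cannot reconcile two incompatible positions of an encoder locator, namely that of $E_1$, already placed, and the one implied by $L'$. Once $F$ is excluded, the bottom pointer of a selector, and hence the entire selector, must sit on top of the locator of $E_2$, which is the assertion of Fact~\ref{fct_add_2nd_selector}.
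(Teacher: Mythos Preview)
Your opening paragraph is right: the only alternative to the bottom pointer of a selector above the locator is the $(a_1,b_4)$-filler $F$, and directly above $F$ one is forced to place a long linker bar $L'$. From that point on, however, you head into a much harder argument than the paper uses, and one that does not obviously close.

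The paper's proof is a one-move observation about the \emph{vertical} side colors $x$ and $y$. The long bar of the already-placed linker $L$ and the long bar of $L'$ lie in the \emph{same row} (the row two units above $E_2$). Every long linker bar has left side $y$ and right side $x$. Since $L'$ must cover the locator column of $E_2$'s section while $L$ starts at the $a_3$ column in the right arm of that same section, $L'$ necessarily lies to the left of $L$. Thus either the right side of $L'$ (color $x$) abuts the left side of $L$ (color $y$) directly --- a mismatch --- or there is a gap with $x$ on the left boundary and $y$ on the right. No group of bars can bridge that gap: the group-$2$ filler stacks have middle bars colored $x$--$x$ or $y$--$y$ only, and the selector bars with $x$ on the left and $y$ on the right carry unique internal colors that force the entire selector stack, whose long bar or bottom pointer would then collide with $E_2$. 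That is the whole proof.

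Your proposed route --- tracking where the $(b_2,a_3)$ and $(a_4,b_1)$ ends of $L'$ land and invoking the modular invariant of Fact~\ref{fct_align} --- has a genuine gap. Take $L'$ in the orientation with $b_2$ at the bottom-left. Then the $a_3$ end of $L'$ falls roughly $(n-1)(2m+1)$ columns to the left of $E_2$'s section, in territory not yet tiled; one may place a fresh encoder there with $a_3$ in the required column. The $a_4$ end falls in a column congruent to an arm position modulo $2m+1$, so a new encoder at the upper level, aligned with $E_1$ through the backbone, can carry $a_4$ there. Neither endpoint is obstructed, and the displacement $K-1\equiv m\pmod{2m+1}$ you invoke is precisely the displacement that makes the \emph{intended} tiling in Figure~\ref{fig_pattern} work; it does not by itself distinguish this bad placement from a good one. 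What actually rules the configuration out is the $x$/$y$ clash between $L$ and $L'$ in the same row, which your argument never touches.
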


\begin{figure}[H]
\begin{center}
\begin{tikzpicture}[scale=0.4]

\draw [fill=blue!10] (0,0)--(0,1)--(8,1)--(8,4)--(9,4)--(9,1)--(17,1)--(17,0)--(9,0)--(9,-3)--(8,-3)--(8,0)--(0,0);

\draw [fill=orange!10] (17,5)--(0,5)--(0,4)--(17,4)--(17,5); \draw [fill=orange!60](8,4)--(8,5)--(9,5)--(9,4)--(8,4);
\draw [fill=orange!10] (-24,0)--(-7,0)--(-7,1)--(-24,1)--(-24,0); \draw [fill=orange!60](-15,0)--(-15,1)--(-16,1)--(-16,0)--(-15,0);

\draw [fill=gray!10] (3,4)--(3,3)--(-5,3)--(-4.7,2.7)--(-5,2)--(4,2)--(4,4)--(3,4);
\draw [fill=gray!10] (-12,1)--(-12,3)--(-5.3,3)--(-5,2.7)--(-5.3,2)--(-11,2)--(-11,1)--(-12,1);

\draw [fill=green!10](-15,2)--(-15,1)--(-16,1)--(-16,2)--(-15,2);

\draw [<->, very thick,red] (-13.5,2.5)--(-12.5,2.5);



\foreach \x in {-18}
{
\draw [fill=gray!10] (\x-6,2)--(\x+4,2)--(\x+4,4)--(\x+3,4)--(\x+3,3)--(\x-6,3);
}

\end{tikzpicture}
\end{center}
\caption{Putting a filler on top of the second locator leads to a dead end.}\label{fig_next}
\end{figure}
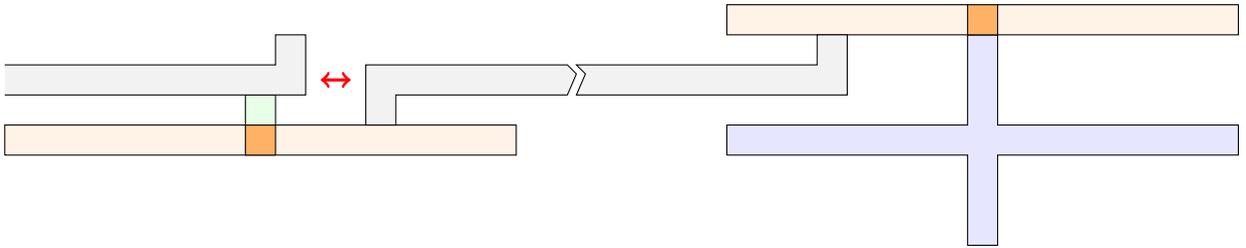

\begin{proof}[Proof of Fact \ref{fct_add_2nd_selector}]
We prove this by contradiction. Suppose we do not place another selector on top of the second locator, then it is easy to check the only other choice is to put a filler (see Figure \ref{fig_next}). This filler must have color $a_1$ on its bottom side and color $b_4$ on the top side. By looking for a group of Wang bars with color $b_4$ at the bottom, the only group to be placed on top of the filler is a linker. Recall that the left side of the middle Wang bar of the linker is colored $y$, and the right side is colored $x$. Hence no Wang bars can be put between the two linkers to connect the colors $x$ to $y$ (see the red double arrow in Figure \ref{fig_next}).
\end{proof}

\begin{figure}[H]
\begin{center}
\begin{tikzpicture}[scale=0.4]

\draw [fill=blue!10] (0,0)--(0,1)--(8,1)--(8,4)--(9,4)--(9,1)--(17,1)--(17,0)--(9,0)--(9,-3)--(8,-3)--(8,0)--(0,0);

\draw [fill=orange!10] (17,5)--(0,5)--(0,4)--(17,4)--(17,5); \draw [fill=orange!60](8,4)--(8,5)--(9,5)--(9,4)--(8,4);
\draw [fill=orange!10] (-24,0)--(-7,0)--(-7,1)--(-24,1)--(-24,0); \draw [fill=orange!60](-15,0)--(-15,1)--(-16,1)--(-16,0)--(-15,0);

\draw [fill=gray!10] (3,4)--(3,3)--(-5,3)--(-4.7,2.7)--(-5,2)--(4,2)--(4,4)--(3,4);
\draw [fill=gray!10] (-12,1)--(-12,3)--(-5.3,3)--(-5,2.7)--(-5.3,2)--(-11,2)--(-11,1)--(-12,1);

\foreach \x in {-24}
\foreach \y in {4}
{
\draw [fill=blue!10] (\x+0,0+\y)--(\x+0,1+\y)--(\x+8,1+\y)--(\x+8,4+\y)--(\x+9,4+\y)--(\x+9,1+\y)--(\x+17,1+\y)--(\x+17,0+\y)--(\x+9,0+\y)--(\x+9,-3+\y)--(\x+8,-3+\y)--(\x+8,0+\y)--(\x+0,0+\y);
}

\end{tikzpicture}
\end{center}
\caption{Adding another selector.}\label{fig_done}
\end{figure}
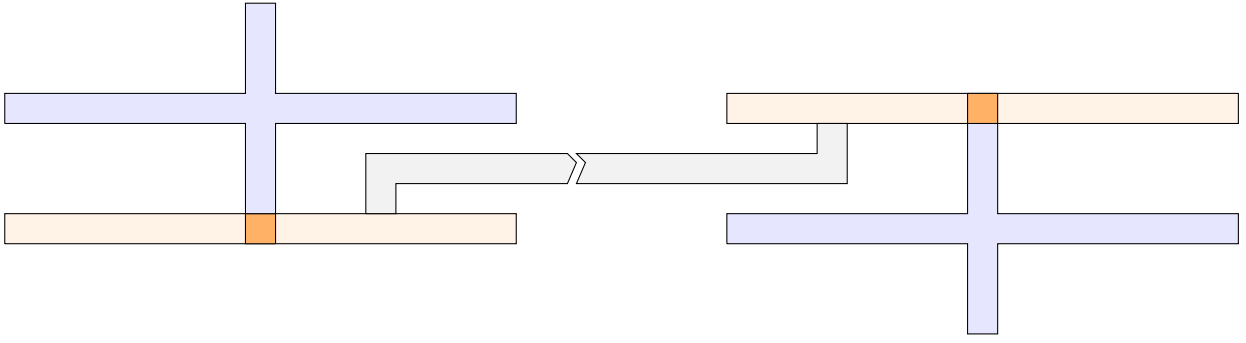

So we must place a second selector as illustrated in Figure \ref{fig_done}. From this partial tiling, we can then repeatedly add an encoder, a linker, or a selector, by applying Fact \ref{fct_add_encoders} to Fact \ref{fct_add_2nd_selector}. By Fact \ref{fct_align} and Fact \ref{fct_align_2}, the horizontal gaps between encoders and the arms of selectors can be filled by aligners. This forms the backbone of the tiling. It can be checked that the rest area can be filled with fillers of group $1$ or group $2$. The resulting tiling is exactly in the pattern illustrated in Figure \ref{fig_pattern}.

Finally, by Fact \ref{fct_align_2}, the linkers always connect the sides of the arms of the encoders which encode the same color of the Wang tiles. So the set of $29$ Wang bars can tile the entire plane if and only if the corresponding set of Wang tiles can. This completes the proof of Theorem \ref{thm_main}. \end{proof}

\subsection{Consequence on Color Deficiency}

\begin{Corollary}
    Tiling the plane with a set of Wang tiles with color deficiency of $25$ is undecidable.
\end{Corollary}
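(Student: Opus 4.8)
The plan is to obtain the corollary by carrying out, on the specific set of $29$ Wang bars constructed in the proof of Theorem~\ref{thm_main}, the Wang-bar-to-Wang-tile reduction of Jeandel and Rolin that underlies Theorem~\ref{thm_cd}, and then reading off the color deficiency of the resulting tile set. Concretely, given an instance of Wang's domino problem I would first run the construction of the preceding subsections to produce the $29$ Wang bars, say of lengths $\ell_1,\dots,\ell_{29}$, and then slice each bar of length $\ell$ into $\ell$ unit Wang tiles lying side by side: if the bar has west color $W$, east color $E$, and north and south unit colors $n_1,\dots,n_\ell$ and $s_1,\dots,s_\ell$, its $j$-th piece gets north color $n_j$, south color $s_j$, west color $W$ when $j=1$ and otherwise a fresh color $g_{j-1}$, and east color $E$ when $j=\ell$ and otherwise a fresh color $g_j$, where all the $g_j$'s are pairwise distinct and appear in no other bar. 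Call the resulting set of Wang tiles $T$.

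Next I would check that $T$ tiles the plane if and only if the $29$ Wang bars do, which is routine: each glue color $g_j$ occurs on exactly one east side and one west side among the tiles of $T$, so in any tiling by $T$ the $j$-th piece of a bar can only sit immediately to the left of the $(j{+}1)$-st piece; hence any occurrence of a piece forces the whole bar around it to appear as one contiguous horizontal block, while the leftmost and rightmost pieces keep the original west and east colors, so adjacent bars abut exactly as they do in a Wang-bar tiling. Thus the tiling problem for $T$ is undecidable by Theorem~\ref{thm_main}.

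The heart of the matter is the color count. Here $n(T)=\sum_i\ell_i$. The colors appearing on north sides of the tiles of $T$ (and likewise on south sides) are exactly those on the horizontal sides of the $29$ bars --- a fixed set whose size $C_0$ does not depend on the domino instance. On the east side, by contrast, one sees all $\sum_i(\ell_i-1)=\sum_i\ell_i-29$ fresh glue colors together with the colors on the vertical sides of the $29$ bars; inspecting the construction, the latter are exactly $z,x,y,0$, so the east side carries $\sum_i\ell_i-25$ colors, and the same holds for the west side. Restricting Wang's domino problem to instances large enough that $\sum_i\ell_i-25>C_0$ --- still undecidable --- one gets $c(T)=\sum_i\ell_i-25$, hence $cd(T)=n(T)-c(T)=25$; since the reduction is faithful, tiling Wang tiles of color deficiency $25$ is undecidable.

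The step I expect to be the main obstacle is exactly this last bookkeeping: one must revisit all $29$ Wang bars of the construction and confirm that \emph{precisely} the four colors $z,x,y,0$ occur on their vertical sides and that the horizontal sides contribute strictly fewer colors once the instance is large, since it is this count of $4$ that yields $cd(T)=29-4=25$ rather than the weaker $29-1=28$ one would get by applying Theorem~\ref{thm_cd} directly to Theorem~\ref{thm_main}. Checking that the slicing is faithful, as sketched above, is straightforward once the glue colors are declared private to each bar.
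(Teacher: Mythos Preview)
Your proposal is correct and follows essentially the same approach as the paper: slice the $29$ Wang bars into unit tiles with fresh glue colors, observe that the vertical sides of the bars use exactly the four colors $x,y,z,0$, and conclude $cd(T)=29-4=25$. The only cosmetic difference is how the equality $cd(T)=25$ (rather than $\le 25$) is secured: the paper pads $T$ with dummy tiles when the horizontal color count happens to dominate, whereas you restrict to domino instances large enough that the east/west color count $\sum_i\ell_i-25$ exceeds the fixed horizontal color count---both fixes are standard and equivalent.
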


\begin{proof}
    We prove by the same method as the proof of Theorem \ref{thm_cd} in \cite{jr12}, by reduction from the tiling problem of a set of $29$ Wang bars. Given a set $B$ of $29$ Wang bars in the forms in the proof of Theorem \ref{thm_main}, we construct a set $T$ of Wang tiles in the following way. If there is a Wang bar with a length of at least $l\geq 2$, we cut it into two pieces of length $1$ and $l-1$, and assign a unique new color to the two new vertical sides of the two new Wang bars. Repeat this operation, each time introducing a new color to both of the vertical sides of the set, until all the Wang bars are of length $1$, which forms a set of Wang tiles. Assume we cut $t$ times in total. Hence the resulting set $T$ of Wang tiles has a total number of $n(T)=29+t$ tiles. 
    
    Note that the number of different colors on the east or west sides of $B$ is $4$ (colors $x$, $y$, $z$, and $0$), 
    hence the number of different colors on the east or west sides of the corresponding set of Wang tiles is $4+t$. This means $c(T)\geq 4+t$. Therefore,
    $$ cd(T)=n(T)-c(T)\leq (29+t)-(4+t)=25.$$
    Note that if $cd(T)<25$, it is easy to add tiles (to $T$) which do not contribute to tiling the plane but increase the color deficiency. So we construct a set $T$ of Wang tiles $T$ with $cd(T)=25$ which tiles the plane if and only if the set $B$ of Wang bars tiles the plane. This completes the proof.
\end{proof}

\section{Conclusion}\label{sec_con}

By introducing novel techniques to Ollinger's framework, we improve the undecidability results of Jeandel and Rolin on the tiling problems with a set of a fixed number of Wang bars, and the tiling problem with a set of Wang tiles with fixed color deficiency. It is interesting to investigate whether these tiling problems remain undecidable for smaller fixed parameters. 

\section*{Acknowledgements}
The first author was supported by the Research Fund of Guangdong University of Foreign Studies (Nos. 297-ZW200011 and 297-ZW230018), and the National Natural Science Foundation of China (No. 61976104).


\end{document}